\theoremstyle{plain}
\newtheorem{thm}{Theorem}[section]
\newtheorem{prop}[thm]{Proposition}
\newtheorem{lem}[thm]{Lemma}
\theoremstyle{definition}
\newtheorem{dfn}[thm]{Definition}
\newtheorem{rem}[thm]{Remark}
\numberwithin{equation}{section}
\renewenvironment{proof}[1][\proofname]{\par
  \pushQED{\qed}%
  \normalfont \topsep6\p@\@plus6\p@\relax
  \trivlist
  \item[\hskip\labelsep
%        \itshape
	\bfseries
    #1\@addpunct{.}]\ignorespaces
}{%
  \popQED\endtrivlist\@endpefalse
}
\DeclareMathOperator{\End}{End}
\newcommand{\bbZ}{\mathbb{Z}}
\newcommand{\bbC}{\mathbb{C}}
\newcommand{\frakS}{\mathfrak{S}}
\newcommand{\affY}{\hat{\mathbf{Y}}}
\newcommand{\finY}{\mathbf{Y}}
\newcommand{\affH}{\hat{\mathbf{H}}}
\newcommand{\finH}{\mathbf{H}}
\newcommand{\gl}{\mathfrak{gl}}
\newcommand{\fraksl}{\mathfrak{sl}}
\newcommand{\affsl}{\hat{\mathfrak{sl}}_N}
\newcommand{\cP}{\mathcal{P}}
\newcommand{\cM}{\mathcal{M}}
\newcommand{\bu}{\mathbf{u}}
\newcommand{\bv}{\mathbf{v}}
\newcommand{\bw}{\mathbf{w}}
\newcommand{\uk}{\underline{k}}
\newcommand{\uc}{\underline{c}}
\newcommand{\ulambda}{\underline{\lambda}}
\newcommand{\umu}{\underline{\mu}}
\title{Higher level Fock spaces and affine Yangian}
\author{Ryosuke Kodera}
\date{}
\let\@old@@maketitle=\@maketitle
\def\@maketitle{%
\footnotetext{%
\hspace*{-1em}\hspace*{-\footnotesep}%
Research Institute for Mathematical Sciences,
Kyoto University, Kyoto 606-8502,
Japan\\
E-mail address: kodera@kurims.kyoto-u.ac.jp
}
\@old@@maketitle
}
\begin{document}
\maketitle

\begin{abstract}
We construct actions of the affine Yangian of type A on higher level Fock spaces by extending known actions of the Yangian of finite type A due to Uglov.
This is a degenerate analog of a result by Takemura-Uglov, which constructed actions of the quantum toroidal algebra on higher level $q$-deformed Fock spaces.
\end{abstract}

\section{Introduction}

The aim of this paper is to construct representations of the affine Yangian of type A.
The affine Yangian of type A is a two parameter deformation of the enveloping algebra of the universal central extension of a Lie algebra $\fraksl_N[u^{\pm 1},v]$ and its defining relations were first introduced by Guay \cite{MR2199856}.
In \cite{MR2199856}, Guay established a Schur-Weyl type duality between the affine Yangian and the degenerate double affine Hecke algebra.
His result gives a fully faithful embedding of the module category of the degenerate double affine Hecke algebra into the module category of the affine Yangian.
However there are many representations which can not be obtained by the functor.

Other family of representations has been constructed by geometric approach, via quiver variety by Varagnolo \cite{MR1818101}, and via affine Laumon space by Feigin-Finkelberg-Negut-Rybnikov \cite{MR2827177}.
In \cite{kodera}, the author proved that the level 1 Fock space admits an action of the affine Yangian by extending an action of the Yangian of finite type A due to Uglov \cite{MR1724950}.
This author's work uses an identification of the Fock space with the equivariant homology group of quiver variety and Varagnolo's result.

In this paper we construct actions of the affine Yangian on Fock spaces of arbitrary level by a purely algebraic method.
Given a positive integer $L$ and an $L$-tuple of integers $\uc$, we define a vector space $F(\uc)$ with a basis indexed by the set of $L$-tuples of partitions.
This space $F(\uc)$ admits an action of the affine Lie algebra $\affsl$ depending on $\uc$, and is called the level $L$ Fock space with multi-charge $\uc$.

Uglov \cite{uglov} constructed actions of the Yangian of finite type A on these Fock spaces.
His work was motivated by a study of Yangian symmetry of the spin Calogero-Sutherland model.
Takemura-Uglov \cite{MR1600311} explored a $q$-deformation of the action.
Namely they constructed a level $0$ action of the quantum affine algebra $U_q(\affsl)$ on the level $1$ $q$-deformed Fock space.
Thus the $q$-deformed Fock space admits two different types of actions of the quantum affine algebra, one is of level $1$ and the other is of level $0$.
The reason of the existence of the two types of actions is explained by Varagnolo-Vasserot \cite{MR1626481} and Saito-Takemura-Uglov \cite{MR1603798}: the $q$-deformed Fock space admits an action of the quantum toroidal algebra.
Their result was extended by Takemura-Uglov \cite{MR1710750} for higher level $q$-deformed Fock spaces.

Now it is quite natural to expect that a degenerate analog of the above result holds.
Since in 90's the definition of affine Yangian was not known, the degenerate version has been missing.
Our main result gives an affirmative answer.

\begin{thm}[Theorem \ref{thm:main}]
The actions of the affine Lie algebra $\affsl$ and the Yangian $\finY$ on the level $L$ Fock space $F(\uc)$ is glued and extended to an action of the affine Yangian $\affY$.
\end{thm}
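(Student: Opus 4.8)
The plan is to realize $F(\uc)$ as an $\affY$-module by writing down all Drinfeld generators of $\affY$ as explicit operators and then checking the defining relations, organizing everything according to the decomposition of the affine Dynkin diagram $\bbZ/N\bbZ$ into the affine node $0$ and the finite sub-diagram $\{1,\dots,N-1\}$. I would use the fact that $\affY$ is generated by the degree-zero generators $x_{i,0}^{\pm},\,h_{i,0}$ for $i\in\bbZ/N\bbZ$, which together span a copy of $U(\affsl)$, by the degree-one Cartan elements $h_{i,1}$, and that all remaining generators $x_{i,r}^{\pm},\,h_{i,r}$ are recovered from these by the usual iterated-commutator recursions (a Levendorskii-type minimal presentation). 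With this presentation, the affine Lie algebra action supplies the operators $x_{i,0}^{\pm}=e_i,f_i$ and $h_{i,0}=h_i$ for \emph{all} $i\in\bbZ/N\bbZ$, while Uglov's finite Yangian action supplies the degree-one data $h_{i,1}$ for the finite nodes $i\in\{1,\dots,N-1\}$. Thus the only generator not already provided by one of the two given actions is a single new operator living at the affine node.

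The second step is to construct that missing operator. Since node $0$ is adjacent to the finite node $1$, so that $a_{1,0}\neq 0$, the relation
\[
[h_{i,1},\,x_{j,0}^{\pm}] \;=\; \pm\, a_{ij}\, x_{j,1}^{\pm} \;+\; (\text{terms of lower Drinfeld degree})
\]
can be read, for $(i,j)=(1,0)$, as a \emph{definition} rather than a constraint: I would set
\[
x_{0,1}^{\pm} \;:=\; \pm\, a_{1,0}^{-1}\,[h_{1,1},\, x_{0,0}^{\pm}] \;+\; (\text{a correction lying in } U(\affsl)),
\]
and then $h_{0,1}:=[x_{0,1}^{+},\,x_{0,0}^{-}]$. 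These are honest operators on $F(\uc)$ because $h_{1,1}$ acts through $\finY$ and $x_{0,0}^{\pm}$ act through $\affsl$. Once $x_{0,1}^{\pm}$ and $h_{0,1}$ are available, all higher generators $x_{i,r}^{\pm},\,h_{i,r}$ with $r\geq 1$ are defined on $F(\uc)$ by the same recursions that define them in $\affY$, so the entire generating set of the affine Yangian is now represented by concrete operators.

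The third step is the verification of the defining relations, which fall into three groups. Relations involving only $x_{i,0}^{\pm},\,h_{i,0}$ hold because these operators generate the affine Lie algebra action; relations supported entirely on the finite nodes $\{1,\dots,N-1\}$ hold because they coincide with the defining relations of $\finY$, which act by Uglov's theorem. The genuine content is therefore the group of \emph{mixed} relations coupling the affine node $0$ to the finite Yangian: the bracket relations $[h_{i,1},x_{0,0}^{\pm}]$ and $[x_{i,1}^{\pm},x_{0,0}^{\pm}]$, the Cartan compatibilities $[h_{i,1},h_{0,1}]=0$, and, most importantly, the deformed (quartic) Serre relations at node $0$ paired with each of its neighbors $1$ and $N-1$. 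I would also need to confirm that the two parameters of $\affY$ are matched correctly against Uglov's single parameter and the multi-charge $\uc$, so that the coefficients appearing in these relations are the ones demanded by the abstract algebra.

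The main obstacle, I expect, is exactly this last group, together with a \emph{consistency} issue intrinsic to the gluing: the operator $x_{0,1}^{\pm}$ could equally well have been built from the other neighbor $N-1$, so one must show that the definition through node $1$ and the definition through node $N-1$ agree, or equivalently that $x_{0,1}^{\pm}$ interacts correctly with both $h_{1,1}$ and $h_{N-1,1}$. These statements do not follow formally from the two given actions and require real computation. A workable strategy is to localize the verification to the three-node configuration $\{N-1,0,1\}$ and to evaluate both sides of each relation on the basis of $L$-tuples of partitions, matching coefficients vector by vector; where the diagram rotation $i\mapsto i+1$ is compatible with the construction it can be used to transport relations among neighboring nodes and so cut the casework roughly in half.
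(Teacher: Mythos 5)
Your proposal takes a genuinely different route from the paper, but as it stands it has a gap that is not just a matter of omitted routine detail. The paper never checks the defining relations of $\affY$ directly: it invokes Guay's criterion (Proposition~\ref{prop:sufficient_condition}), which says that it suffices to produce a bijective linear map intertwining the $\finY$-action with the Dynkin rotation automorphism $\rho$; the entire content of the paper is then the construction of the shift operator $T_{\infty}$ on the semi-infinite wedge space (via the auxiliary vectors $v_{L,N}$, $v_{L,N-1}$) and the stabilization statement Lemma~\ref{lem:key}, after which Proposition~\ref{prop:cyclic} and Guay's criterion finish the proof with no relation-checking at all. Your plan instead defines the missing node-$0$ generators by hand, e.g.\ $x_{0,1}^{\pm}$ from $[h_{1,1},x_{0,0}^{\pm}]$ (note that for $(i,j)=(1,0)$ the relevant relations are the twisted ones (\ref{eq:Y6})--(\ref{eq:Y7}) involving $\beta$, so the ``correction'' terms are forced and $\beta$-dependent), and then proposes to verify the mixed relations, the Serre relations at node $0$, and the consistency between the two possible definitions of $x_{0,1}^{\pm}$ ``by evaluating both sides on the basis of $L$-tuples of partitions and matching coefficients.''

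That final step is the gap, and it is exactly where all the difficulty of the theorem sits. First, Uglov's operators $h_{i,1}$ have no closed combinatorial formula on the basis $\ket{\ulambda,\uc}$: they are defined only through the stabilization of the $\finY$-action on the finite-wedge subspaces $V_{M,s+lNL}^d$ (Proposition~\ref{prop:intertwine} and (\ref{eq:Yangian_action})), so ``matching coefficients vector by vector'' is not an available computation. Second, and more seriously, the affine generators $x_{0,0}^{\pm}$ do \emph{not} preserve the degree components $V_{M,n}^d$ (this is the paper's explicit remark after Proposition 3.8), so the composite $[h_{1,1},x_{0,0}^{\pm}]$ mixes operators defined on different filtration pieces of the inductive limit; one needs a mechanism controlling how the limit-defined Yangian operators interact with operators that change the semi-infinite tail. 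This is precisely what the paper's vanishing lemmas (Lemma~\ref{lem:vanish}, Lemma~\ref{lem:degree}) and the stabilization identity of Lemma~\ref{lem:key} supply, and your plan has nothing in their place. Finally, your starting point---a Levendorskii-type minimal presentation of $\affY$ by $x_{i,0}^{\pm},h_{i,0},h_{i,1}$ and finitely many relations---is asserted, not proved or cited; for affine Yangians (as opposed to finite-type ones) this is itself a nontrivial statement and is not among the facts the paper relies on. Without these three ingredients your argument is a program rather than a proof; the rotation trick you mention only in passing (``where the diagram rotation is compatible with the construction'') is, in the paper, the entire proof.
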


A strategy of the proof is the same as \cite{MR1710750}.
The affine Yangian has an automorphism corresponding to the rotation of the Dynkin diagram of affine type A.
We define a bijective $\bbC$-linear map corresponding to the Dynkin automorphism on the semi-infinite wedge space and use it to construct the action of the affine Yangian.

The paper is organized as follows.
In Section \ref{sec:affine_Yangian}, we introduce affine Yangian and recall some results of Guay.
In Section \ref{sec:Fock_space}, we recall the Yangian actions on the Fock spaces constructed by Uglov.
Main ingredients are semi-infinite wedge construction of Fock space and Schur-Weyl type duality due to Drinfeld.
In Section \ref{sec:action}, we prove the main theorem.

\subsection*{Acknowledgments}
The author would like to thank Kentaro Wada for discussions on higher level Fock spaces.
He also thanks Hiraku Nakajima for discussions and encouragement.
This work was supported by JSPS KAKENHI Grant Number 25220701.

\section{Affine Yangian and Schur-Weyl type functors}\label{sec:affine_Yangian}

\subsection{Affine Yangian}

Fix an integer $N \geq 3$ throughout the paper.

\begin{dfn}
The affine Yangian $\affY = \mathbf{Y}_{\hbar,\beta}(\affsl)$ with parameters $\hbar,\beta \in \bbC$ is the $\bbC$-algebra generated by $X_{i,r}^+, X_{i,r}^-, H_{i,r}$ ($i \in \bbZ / N\bbZ$, $r \in \bbZ_{\geq 0}$) subject to the following relations:
\begin{equation}
	[H_{i,r}, H_{j,s}] = 0, \label{eq:Y1}
\end{equation}
\begin{equation}
	[X_{i,r}^{+}, X_{j,s}^{-}] = \delta_{ij} H_{i, r+s}, \label{eq:Y2}
\end{equation}
\begin{equation}
	[H_{i,0}, X_{j,s}^{\pm}] = \pm a_{ij}X_{j,s}^{\pm} \label{eq:Y3}
\end{equation}
for all $i,j$;
\begin{align}
	[H_{i,r+1}, X_{j, s}^{\pm}] - [H_{i, r}, X_{j, s+1}^{\pm}] &= \pm \dfrac{1}{2}\hbar a_{ij} (H_{i, r}X_{j, s}^{\pm} + X_{j, s}^{\pm}H_{i, r}), \label{eq:Y4} \\
	[X_{i,r+1}^{\pm}, X_{j, s}^{\pm}] - [X_{i, r}^{\pm}, X_{j, s+1}^{\pm}] &= \pm \dfrac{1}{2}\hbar a_{ij} (X_{i, r}^{\pm}X_{j, s}^{\pm} + X_{j, s}^{\pm}X_{i, r}^{\pm}) \label{eq:Y5}
\end{align}
for $(i,j) \neq (1,0), (0,1), (N-1,0), (0,N-1)$;
\begin{align}
	[H_{i, r+1}, X^{+}_{j, s}] - [H_{i, r}, X^{+}_{j, s+1}] &= (\beta - \hbar)H_{i, r}X^{+}_{j, s} - \beta X^{+}_{j, s}H_{i, r}, \label{eq:Y6} \\
	[H_{i, r+1}, X^{-}_{j, s}] - [H_{i, r}, X^{-}_{j, s+1}] &= \beta H_{i, r}X^{-}_{j, s} + (\hbar - \beta)X^{-}_{j, s}H_{i, r}, \label{eq:Y7} \\
		[X^{+}_{i, r+1}, X^{+}_{j, s}] - [X^{+}_{i, r}, X^{+}_{j, s+1}] &= (\beta - \hbar)X^{+}_{i, r}X^{+}_{j, s} - \beta X^{+}_{j, s}X^{+}_{i, r}, \label{eq:Y8} \\
	[X^{-}_{i, r+1}, X^{-}_{j, s}] - [X^{-}_{i, r}, X^{-}_{j, s+1}] &= \beta X^{-}_{i, r}X^{-}_{j, s} + (\hbar - \beta)X^{-}_{j, s}X^{-}_{i, r} \label{eq:Y9}
\end{align}
for $(i,j) = (1,0)$ and $(0,N-1)$;
\begin{align}
	[H_{i, r+1}, X^{+}_{j, s}] - [H_{i, r}, X^{+}_{j, s+1}] &= - \beta H_{i, r}X^{+}_{j, s} + (\beta - \hbar)X^{+}_{j, s}H_{i, r}, \label{eq:Y10} \\
	[H_{i, r+1}, X^{-}_{j, s}] - [H_{i, r}, X^{-}_{j, s+1}] &= (\hbar - \beta) H_{i, r}X^{-}_{j, s} + \beta X^{-}_{j, s}H_{i, r} \label{eq:Y11}
\end{align}
for $(i,j) = (0,1)$ and $(N-1,0)$;
\begin{equation}
	\sum_{w \in \frakS_{1-a_{ij}}} [X_{i,r_{w(1)}}^{\pm}, [X_{i,r_{w(2)}}^{\pm}, \dots, [X_{i,r_{w(1 - a_{ij})}}^{\pm}, X_{j,s}^{\pm}]\dots]] = 0 \label{eq:Y12}
\end{equation}
for $i \neq j$, where
\[
a_{ij} =
\begin{cases}
2  \text{ if } i=j, \\
-1 \text{ if } i=j \pm 1, \\
0  \text{ otherwise.}
\end{cases}
\]

The Yangian $\finY = \mathbf{Y}_{\hbar}(\fraksl_N)$ with a parameter $\hbar \in \bbC$ is the $\bbC$-algebra generated by $X_{i,r}^+, X_{i,r}^-, H_{i,r}$ ($i = 1, \ldots, N-1$, $r \in \bbZ_{\geq 0}$) subject to the relations (\ref{eq:Y1}), (\ref{eq:Y2}), (\ref{eq:Y3}), (\ref{eq:Y4}), (\ref{eq:Y5}), (\ref{eq:Y12}).
\end{dfn}

Let $\affY^{(1)}$ be the subalgebra of $\affY$ generated by $X_{i,r}^+, X_{i,r}^-, H_{i,r}$ ($i = 1, \ldots, N-1$, $r \in \bbZ_{\geq 0}$) and $\affY^{(2)}$ be the subalgebra of $\affY$ generated by $X_{i,0}^+, X_{i,0}^-, H_{i,0}$ ($i \in \bbZ / N\bbZ$).
Then there exist surjective algebra homomorphisms $\finY \to \affY^{(1)}$ and $U(\affsl) \to \affY^{(2)}$.
In \cite{MR2323534}, Guay proved a PBW type theorem for the affine Yangian $\affY$ when $N \geq 4$.
It implies that the two homomorphisms above are isomorphisms if $N \geq 4$.
We sometimes write $X_{i}^{\pm} = X_{i,0}^{\pm}$, $H_{i} = H_{i,0}$ and identify them with the standard Chevalley generators of $\affsl$.

Let $E_{ij}$ be the matrix unit with $(i,j)$-th entry $1$.
We regard them as elements of the Lie algebra $\gl_N$.
We put
\begin{align*}
	J(X_{i}^+) &= X_{i,1}^+ + \dfrac{\hbar}{4} \Bigg( \sum_{p=i+1}^N (E_{i,p} E_{p,i+1} + E_{p,i+1} E_{i,p}) - \sum_{p=1}^i (E_{i,p} E_{p,i+1} + E_{p,i+1} E_{i,p}) \Bigg), \\
	J(X_{i}^-) &= X_{i,1}^- + \dfrac{\hbar}{4} \Bigg( \sum_{p=i+1}^N (E_{i+1,p} E_{p,i} + E_{p,i} E_{i+1,p}) - \sum_{p=1}^i (E_{i+1,p} E_{p,i} + E_{p,i} E_{i+1,p}) \Bigg), \\
	J(H_i) &= H_{i,1} + \dfrac{\hbar}{4} \Bigg( \sum_{p=i+1}^N (E_{i,p} E_{p,i} + E_{p,i} E_{i,p}) - \sum_{p=1}^{i-1} (E_{i,p} E_{p,i} + E_{p,i} E_{i,p}) \\
	+ & \sum_{p=1}^i (E_{i+1,p} E_{p,i+1} + E_{p,i+1} E_{i+1,p}) - \sum_{p=i+2}^{N} (E_{i+1,p} E_{p,i+1} + E_{p,i+1} E_{i+1,p}) - 2H_i^2 \Bigg)
\end{align*}
for $i = 1, \ldots, N-1$.
Then $\{ X_{i}^+, X_{i}^-, H_i, J(X_i^+), J(X_i^-), J(H_i)\ (i=1, \ldots, N-1) \}$ gives another set of generators for the Yangian $\finY$.

\begin{lem}[{\cite[Lemma 3.5]{MR2199856}}]
There exists an algebra automorphism $\rho$ of $\affY$ defined by
\[
	\rho(X_{i,r}^{\pm}) = \sum_{s=0}^r \dbinom{r}{s} \left( \dfrac{\hbar}{2} \right)^{r-s} X_{i-1,s}^{\pm}, \quad \rho(H_{i,r}) = \sum_{s=0}^r \dbinom{r}{s} \left( \dfrac{\hbar}{2} \right)^{r-s} H_{i-1,s}
\]
if $i \neq 0,1$, and
\[
	\rho(X_{i,r}^{\pm}) = \sum_{s=0}^r \dbinom{r}{s} \beta^{r-s} X_{i-1,s}^{\pm}, \quad \rho(H_{i,r}) = \sum_{s=0}^r \dbinom{r}{s} \beta^{r-s} H_{i-1,s}
\]
if $i = 0,1$.
\end{lem}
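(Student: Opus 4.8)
The plan is to verify that the assignment $\rho$ respects all the defining relations (\ref{eq:Y1})--(\ref{eq:Y12}), so that it extends to a well-defined algebra endomorphism of $\affY$, and then to exhibit an explicit two-sided inverse. The computations become transparent after passing to exponential generating functions. For each node set $c_i = \hbar/2$ if $i \neq 0,1$ and $c_i = \beta$ if $i \in \{0,1\}$, and write $\mathcal{X}_i^{\pm}(z) = \sum_{r\geq 0} X_{i,r}^{\pm}z^r/r!$ and $\mathcal{H}_i(z) = \sum_{r\geq 0}H_{i,r}z^r/r!$. A direct manipulation of binomial coefficients shows that the defining formulas for $\rho$ are equivalent to the single statement that $\rho$ sends each generating function at node $i$ to the same generating function at node $i-1$ multiplied by the scalar $e^{c_i z}$; that is, $\rho(\mathcal{X}_i^{\pm}(z)) = e^{c_i z}\mathcal{X}_{i-1}^{\pm}(z)$ and $\rho(\mathcal{H}_i(z)) = e^{c_i z}\mathcal{H}_{i-1}(z)$. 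Thus $\rho$ is the rotation $i \mapsto i-1$ of the affine Dynkin diagram twisted by a spectral shift, and checking the relations amounts to tracking how these exponential factors interact with the structure constants.

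The relations (\ref{eq:Y1}), (\ref{eq:Y3}) and the Serre relations (\ref{eq:Y12}) are immediate: each is multilinear in the generators, so applying $\rho$ merely relabels $i \mapsto i-1$, $j \mapsto j-1$ and pulls out a common scalar exponential that factors through every term, and the identity follows from the same relation for the rotated indices together with the rotation invariance $a_{i-1,j-1} = a_{ij}$ of the Cartan matrix. The relation (\ref{eq:Y2}) reads $[\mathcal{X}_i^+(z),\mathcal{X}_i^-(w)] = \mathcal{H}_i(z+w)$ in generating-function form; applying $\rho$ multiplies the left side by $e^{c_i z}e^{c_i w} = e^{c_i(z+w)}$, which is exactly the factor produced on the right by $\rho(\mathcal{H}_i(z+w)) = e^{c_i(z+w)}\mathcal{H}_{i-1}(z+w)$. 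It is precisely this matching that forces $\rho$ to use one and the same constant $c_i$ for all three families $X_i^+, X_i^-, H_i$ at a given node.

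The substance of the proof lies in (\ref{eq:Y4})--(\ref{eq:Y11}). Written uniformly, each of these (say for $H$ and $X^+$) takes the form $(\partial_z - \partial_w)[\mathcal{H}_i(z),\mathcal{X}_j^+(w)] = \alpha_{ij}\,\mathcal{H}_i(z)\mathcal{X}_j^+(w) + \gamma_{ij}\,\mathcal{X}_j^+(w)\mathcal{H}_i(z)$ for scalars $\alpha_{ij},\gamma_{ij}$ read off from the right-hand sides. Applying $\rho$ and using $(\partial_z-\partial_w)(e^{c_i z + c_j w}F) = e^{c_i z+c_j w}((c_i-c_j)F + (\partial_z-\partial_w)F)$, one finds that the image is the same type of relation at indices $(i-1,j-1)$ but with $(\alpha_{ij},\gamma_{ij})$ replaced by $(\alpha_{ij}-(c_i-c_j),\,\gamma_{ij}+(c_i-c_j))$. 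Hence $\rho$ is a homomorphism if and only if $\alpha_{i-1,j-1} = \alpha_{ij}-(c_i-c_j)$ and $\gamma_{i-1,j-1} = \gamma_{ij}+(c_i-c_j)$ for every pair. The sum $\alpha_{ij}+\gamma_{ij}$ always equals $\hbar a_{ij}$, which is rotation invariant, so the sum condition is automatic; the content is the difference condition $d_{ij}-d_{i-1,j-1} = 2(c_i-c_j)$ for $d_{ij} := \alpha_{ij}-\gamma_{ij}$. Since $d_{ij}=0$ except on the four special pairs $(1,0),(0,N-1),(0,1),(N-1,0)$, where it equals $\pm(2\beta-\hbar)$, this reduces to a finite check. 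I expect this case analysis to be the main obstacle: the rotation permutes the special pairs among themselves and moves some into the generic regime, and one must confirm that the jump in $d$ is absorbed exactly by $2(c_i-c_j)$ --- which is where the choice $c_0=c_1=\beta$ versus $c_i=\hbar/2$ otherwise is forced. The relations for $X^-$ and the $X$--$X$ relations (\ref{eq:Y5}), (\ref{eq:Y8}), (\ref{eq:Y9}) are handled by identical bookkeeping, since their structure constants yield the same values of $d_{ij}$.

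Finally, for bijectivity I would introduce the map $\rho'$ defined on generators by the analogous formula with the opposite rotation $i \mapsto i+1$ and shift constants negated, namely $\rho'(\mathcal{A}_i(z)) = e^{-c_{i+1}z}\mathcal{A}_{i+1}(z)$ for $\mathcal{A} \in \{\mathcal{X}^+,\mathcal{X}^-,\mathcal{H}\}$. The very same relation-checks as above (with signs reversed) show that $\rho'$ is also an algebra endomorphism, and on generating functions the compositions give $\rho'\rho(\mathcal{A}_i(z)) = e^{c_i z}e^{-c_i z}\mathcal{A}_i(z) = \mathcal{A}_i(z)$ and likewise $\rho\rho' = \id$. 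Since $\rho\circ\rho'$ and $\rho'\circ\rho$ are algebra homomorphisms agreeing with the identity on a generating set, they equal the identity, so $\rho$ is an automorphism with $\rho^{-1}=\rho'$.
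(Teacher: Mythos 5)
Your generating-function reformulation is a sound framework, and your treatment of the diagonal, adjacent, and special pairs is correct: the finite check you leave as "expected" does close (for instance $(0,N-1)\mapsto(N-1,N-2)$ has $d$-jump $(2\beta-\hbar)-0$, which equals $2(c_0-c_{N-1})=2(\beta-\hbar/2)$, and the other five nontrivial adjacent cases work out the same way). However, there is a genuine flaw in the reduction itself: your criterion ``$\rho$ is a homomorphism if and only if $\alpha_{i-1,j-1}=\alpha_{ij}-(c_i-c_j)$ and $\gamma_{i-1,j-1}=\gamma_{ij}+(c_i-c_j)$ for \emph{every} pair'' is false, and taken literally it would lead you to conclude the lemma fails. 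The problem is the non-adjacent pairs, which exist as soon as $N\geq 4$. Take $(i,j)=(2,0)$ with $N\geq 4$: relation (\ref{eq:Y4}) there has $\alpha_{2,0}=\gamma_{2,0}=0$, and so does the rotated pair $(1,N-1)$, so your difference condition demands $0=d_{2,0}-d_{1,N-1}=2(c_2-c_0)=\hbar-2\beta$, which is false for generic parameters. The same happens at $(0,2)$, $(1,3)$, $(N-1,1)$, and so on; running your own finite check over ``every pair'' would get stuck exactly there.

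The repair needs an observation that coefficient-matching cannot see: matching the coefficients of $H_{i-1}(z)X^{\pm}_{j-1}(w)$ and $X^{\pm}_{j-1}(w)H_{i-1}(z)$ separately is sufficient for the image of a relation to lie in the defining ideal, but not necessary, because in the quotient algebra these products are not independent. When $a_{ij}=0$ the commutator itself vanishes in $\affY$: from (\ref{eq:Y3}) and (\ref{eq:Y4}) one gets $[H_{i,r},X^{\pm}_{j,s}]=[H_{i,0},X^{\pm}_{j,r+s}]=0$ by induction on $r$, and for the $X$--$X$ case the Serre relation (\ref{eq:Y12}) with $a_{ij}=0$ is literally $[X^{\pm}_{i,r},X^{\pm}_{j,s}]=0$. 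Hence applying $\rho$ to (\ref{eq:Y4}) or (\ref{eq:Y5}) at a non-adjacent pair produces $e^{c_iz+c_jw}\bigl((c_i-c_j)C+(\partial_z-\partial_w)C\bigr)$ where $C$ is a commutator already lying in the ideal, so no condition on $c_i-c_j$ is needed there. With the verification split this way --- coefficient matching only for $i=j$ and adjacent pairs, vanishing of commutators for non-adjacent pairs --- your argument goes through. Note also that the paper itself gives no proof of this lemma (it is imported from Guay's work), so there is no internal argument to compare against; but any correct direct verification must handle this point.
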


\begin{prop}[{\cite[(6.44)]{MR2199856}}]\label{prop:sufficient_condition}
Let $\varphi \colon \finY \to \End_{\bbC}M$ be a $\finY$-module.
Suppose that there exists a bijective $\bbC$-linear endomorphism $T$ of $M$ satisfying:
\begin{align*}
	\varphi(\rho(X_{i,r}^{\pm})) &= T^{-1} \varphi(X_{i,r}^{\pm}) T, \\
	\varphi(\rho(H_{i,r})) &= T^{-1} \varphi(H_{i,r}) T
\end{align*}
for $i=2, \ldots, N-1$, and
\begin{align*}
	\varphi(\rho^2(X_{1,r}^{\pm})) &= T^{-2} \varphi(X_{1,r}^{\pm}) T^2, \\
	\varphi(\rho^2(H_{1,r})) &= T^{-2} \varphi(H_{1,r}) T^2.
\end{align*}
Then $\varphi$ extends to a $\affY$-module by
\begin{align*}
	\varphi(X_{0,r}^{\pm}) &= T \varphi(\rho(X_{0,r}^{\pm})) T^{-1}, \\
	\varphi(H_{0,r}) &= T \varphi(\rho(H_{0,r})) T^{-1}.
\end{align*}
\end{prop}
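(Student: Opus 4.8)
The plan is to read the two formulas in the statement as the \emph{definition} of the operators $\varphi(X_{0,r}^{\pm})$ and $\varphi(H_{0,r})$, and then to show that the resulting assignment on the full set of generators $X_{i,r}^{\pm}, H_{i,r}$ ($i \in \bbZ/N\bbZ$) respects every defining relation of $\affY$. These formulas make sense because $\rho(X_{0,r}^{\pm})$ and $\rho(H_{0,r})$ are $\bbC$-linear combinations of the generators attached to the node $N-1$, which already lie in $\affY^{(1)} \cong \finY$, so $\varphi$ is defined on them. To keep the bookkeeping honest I would work on the free algebra $F$ generated by the symbols $X_{i,r}^{\pm}, H_{i,r}$ ($i \in \bbZ/N\bbZ$): write $\affY = F/I$, let $\hat\varphi \colon F \to \End_{\bbC} M$ be the homomorphism sending the finite-node generators through $\varphi$ and the node-$0$ generators through the stated formula, and let $\hat\rho \colon F \to F$ be the algebra endomorphism defined by the same expressions as $\rho$. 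Since $\hat\rho$ is triangular in the index $r$ it is invertible, it descends to $\rho$, and hence $\hat\rho(I) = I$.

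The first step is to upgrade the hypotheses into one uniform statement valid at \emph{every} node:
\begin{equation*}
	\hat\varphi(\hat\rho(g)) = T^{-1}\hat\varphi(g)T, \qquad g = X_{i,r}^{\pm},\ H_{i,r},\quad i \in \bbZ/N\bbZ .
\end{equation*}
For $i = 2, \dots, N-1$ this is one of the assumptions, and for $i = 0$ it is just the definition of $\hat\varphi$ rewritten. The case $i = 1$ is exactly where the $\rho^{2}$-hypothesis is needed: since $\rho(X_{1,r}^{\pm}) = \sum_{s} \binom{r}{s}\beta^{r-s} X_{0,s}^{\pm}$, the definition of $\hat\varphi$ on the node $0$ gives $\hat\varphi(\hat\rho(X_{1,r}^{\pm})) = T\,\varphi(\rho^{2}(X_{1,r}^{\pm}))\,T^{-1}$, and feeding in $\varphi(\rho^{2}(X_{1,r}^{\pm})) = T^{-2}\varphi(X_{1,r}^{\pm})T^{2}$ collapses the right-hand side to $T^{-1}\varphi(X_{1,r}^{\pm})T$; the computation for $H_{1}$ is identical.

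Now both $\hat\varphi \circ \hat\rho$ and $\mathrm{Ad}_{T^{-1}} \circ \hat\varphi$ are algebra homomorphisms $F \to \End_{\bbC} M$, and the display says they coincide on generators; by the universal property of $F$ they are equal. In particular $\hat\varphi(\hat\rho^{k}(w)) = T^{-k}\hat\varphi(w)T^{k}$ for all $w \in F$ and $k \in \bbZ$, so $\ker\hat\varphi$ is $\hat\rho$-stable and it suffices to check that $\hat\varphi$ annihilates a family of relators whose $\hat\rho$-orbits generate $I$. For this family I would take the defining relations \eqref{eq:Y1}, \eqref{eq:Y2}, \eqref{eq:Y3}, \eqref{eq:Y4}, \eqref{eq:Y5}, \eqref{eq:Y12} restricted to the nodes $1, \dots, N-1$; these are precisely the defining relations of $\finY$, so $\hat\varphi$ kills them because $\varphi$ is a $\finY$-module.

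The real obstacle is the remaining structural claim: that every defining relation of $\affY$ lies in the two-sided ideal generated by the $\hat\rho$-orbits of these node-$0$-free relations. Each relation involves at most two nodes, which form an edge or a non-adjacent pair of the cyclic affine Dynkin diagram; because $N \geq 3$, any such pair can be moved into $\{1, \dots, N-1\}$ by a suitable power of $\hat\rho$, so on the nose it becomes node-$0$-free. The delicate point is that under $\hat\rho$ the asymmetric relations \eqref{eq:Y6}--\eqref{eq:Y11} attached to the edges $\{0,1\}$ and $\{N-1,0\}$ must match the symmetric relations \eqref{eq:Y4}, \eqref{eq:Y5}, up to the invertible binomial mixing of the index $r$; this is exactly the compatibility that makes $\rho$ an algebra automorphism, so it is underwritten by the preceding Lemma, but it has to be tracked edge by edge. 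Finally, identifying a relation that is supported on the nodes $1, \dots, N-1$ and lies in $I$ with an actual consequence of the $\finY$-relations uses $\affY^{(1)} \cong \finY$, which holds for $N \geq 4$. Granting this bookkeeping, $\ker\hat\varphi \supseteq I$, so $\hat\varphi$ factors through $\affY$ and yields the asserted extension of $\varphi$.
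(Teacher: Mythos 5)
Your skeleton is sound, and it is essentially what any proof of this citation must contain: define $\hat\varphi$ on the free algebra $F$ on all the symbols $X_{i,r}^{\pm}, H_{i,r}$ ($i\in\bbZ/N\bbZ$), upgrade the hypotheses to the single identity $\hat\varphi\circ\hat\rho=\mathrm{Ad}_{T^{-1}}\circ\hat\varphi$ on $F$ (your node-$1$ computation, where the $\rho^{2}$-hypothesis enters, is exactly right), observe that $\ker\hat\varphi$ is a two-sided ideal stable under $\hat\rho^{\pm1}$, and reduce to killing, for each defining relator $w$, some translate $\hat\rho^{k}(w)$ supported on the nodes $1,\dots,N-1$. (Note that the paper itself gives no proof — the proposition is quoted from Guay — so the comparison is with what the cited argument has to contain.) The genuine gap is your last step. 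Knowing that $\hat\rho^{k}(w)$ is node-$0$-free and lies in the defining ideal $I$ of $\affY$ does not yet imply $\hat\varphi(\hat\rho^{k}(w))=0$: on the subalgebra $F_{\mathrm{fin}}$ of node-$0$-free words, $\hat\varphi$ factors through $\finY$, so what you need is $\hat\rho^{k}(w)\in I_{\mathrm{fin}}$ (the ideal of $\finY$-relations), not merely $\hat\rho^{k}(w)\in I\cap F_{\mathrm{fin}}$. You bridge this by $\finY\cong\affY^{(1)}$, i.e.\ $I\cap F_{\mathrm{fin}}=I_{\mathrm{fin}}$; but by the paper's own account this isomorphism comes from the PBW theorem of \cite{MR2323534} and is available only for $N\geq4$, while the standing assumption of the paper, and the range in which Proposition~\ref{prop:sufficient_condition} is applied, is $N\geq3$. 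So as written your proof does not cover $N=3$, and even for $N\geq4$ it rests on a theorem that the cited source (which predates the PBW theorem) could not have used. Citing the statement of the automorphism lemma \cite[Lemma 3.5]{MR2199856} cannot close this, since it only yields the ideal-level fact $\hat\rho(I)=I$; the span-level matching of relator families, which you correctly single out as the ``delicate point,'' is exactly what is left unproved.

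The fix is to carry out that deferred computation, which is elementary and removes every appeal to PBW: one checks that $\hat\rho$ sends each family of defining relators into the \emph{linear span} of a single target family, so that $\hat\rho^{k}(w)$ is literally a finite linear combination of $\finY$-relators. For example, writing $\{a,b\}=ab+ba$ and encoding the relators \eqref{eq:Y4} at $(i,j)=(2,1)$ (sign $+$) as the coefficients of
\begin{equation*}
	(z-w)\,[H_{2}(z),X_{1}^{+}(w)]+\tfrac{\hbar}{2}\{H_{2}(z),X_{1}^{+}(w)\}-[H_{2,0},X_{1}^{+}(w)]+[H_{2}(z),X_{1,0}^{+}],
\end{equation*}
where $H_{2}(z)=\sum_{r\geq0}H_{2,r}z^{-r-1}$ and $X_{1}^{+}(w)=\sum_{s\geq0}X_{1,s}^{+}w^{-s-1}$, the map $\hat\rho$ replaces $H_{2}(z)$ by $H_{1}(z-\tfrac{\hbar}{2})$ and $X_{1}^{+}(w)$ by $X_{0}^{+}(w-\beta)$; after the invertible binomial substitution $z\mapsto z+\tfrac{\hbar}{2}$, $w\mapsto w+\beta$ this becomes
\begin{equation*}
	\bigl(z-w-(\beta-\tfrac{\hbar}{2})\bigr)[H_{1}(z),X_{0}^{+}(w)]+\tfrac{\hbar}{2}\{H_{1}(z),X_{0}^{+}(w)\}-[H_{1,0},X_{0}^{+}(w)]+[H_{1}(z),X_{0,0}^{+}],
\end{equation*}
which is precisely the generating series of the relators \eqref{eq:Y6} at $(1,0)$. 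The same bookkeeping (Vandermonde's identity for \eqref{eq:Y2}, relabeling under the symmetrization for \eqref{eq:Y12}, and the analogous shift computation for the remaining edge relations) handles every family, and is in substance the proof of the automorphism lemma itself. With that in hand, your argument closes for all $N\geq3$: each $\hat\rho^{k}(w)$ lies in $I_{\mathrm{fin}}\subseteq\ker\hat\varphi$, hence $w\in\ker\hat\varphi$ by $\hat\rho$-stability, so $\hat\varphi$ factors through $\affY$.
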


\subsection{Degenerate double affine Hecke algebra}

Fix an integer $n \geq 1$.

\begin{dfn}
The degenerate double affine Hecke algebra $\affH = \mathbf{H}_{t,c}(\hat{\frakS}_n)$ with parameters $t, c \in \bbC$ is the $\bbC$-algebra generated by $s_1, \ldots, s_{n-1}$, $x_1^{\pm 1}, \ldots, x_n^{\pm 1}$, $u_1, \ldots, u_n$ subject to the following relations:
\begin{gather}
	s_i^2 = 1, \ s_i s_j = s_j s_i \ (i \neq j, j \pm 1), \ s_i s_{i+1} s_i = s_{i+1} s_i s_{i+1}, \label{eq:H1} \\
	x_i x_i^{-1} = x_i^{-1} x_i = 1, \ x_i x_j = x_j x_i, \	s_i x_i = x_{i+1} s_i, \ s_i x_j = x_j s_i \ (j \neq i,i+1), \label{eq:H2} \\
	u_i u_j = u_j u_i, \ s_i u_i - u_{i+1} s_i = -c, \ s_i u_j = u_j s_i \ (j \neq i,i+1), \label{eq:H3} \\
	[u_i, x_i] = t x_i + c \left( \sum_{j<i} x_j s_{ji} + \sum_{i<j} x_i s_{ij} \right), \ [u_i, x_j] = \begin{cases}
		-c x_i s_{ij} \text{ if } i < j, \\
		-c x_j s_{ji} \text{ if } j < i, \label{eq:H4}
	\end{cases} 
\end{gather}
where $s_{ij}$ denotes the permutation of $i$ and $j$.

The degenerate affine Hecke algebra $\finH = \mathbf{H}_{c}(\frakS_n)$ with a parameter $c \in \bbC$ is the $\bbC$-algebra generated by $s_1, \ldots, s_{n-1}$, $u_1, \ldots, u_n$ subject to the relations (\ref{eq:H1}) and (\ref{eq:H3}).
\end{dfn}

It is well known that 
\[
	\affH \cong \bbC[x_1^{\pm 1}, \ldots, x_n^{\pm 1}] \otimes \bbC \frakS_n \otimes \bbC[u_1, \ldots, u_n]
\]
as a $\bbC$-vector space, hence the subalgebra of $\affH$ generated by $s_1, \ldots, s_{n-1}$, $u_1, \ldots, u_n$ is isomorphic to $\finH$, and the subalgebra generated by $s_1, \ldots, s_{n-1}$, $x_1^{\pm 1}, \ldots, x_n^{\pm 1}$ is isomorphic to $\bbC[x_1^{\pm 1}, \ldots, x_n^{\pm 1}] \rtimes \bbC \frakS_n$.
The latter is identified with the group algebra of the extended affine Weyl group $\hat\frakS_n$ of $GL_n$.

We put
\[
	y_i^{(n)} = u_i + \dfrac{c}{2} \left( \sum_{i<j} s_{ij} - \sum_{j<i} s_{ji} \right).
\]

\subsection{Schur-Weyl type functors}

Let $V$ be the vector representation of $\gl_N$ with a standard basis $v_1, \ldots, v_N$, that is, $\gl_N$ acts on $V$ by $E_{ij} v_k = \delta_{jk} v_i$.
The symmetric group $\frakS_n$ acts on $V^{\otimes n}$ via permutation of factors.
For an element $X \in \gl_N$ and $k=1, \ldots, n$, we put
\[
	(X)_k = 1^{\otimes k-1} \otimes X \otimes 1^{\otimes n-k} \in \End V^{\otimes n}.
\]

\begin{thm}[\cite{MR831053}]
Let $M$ be a right $\finH$-module and set $\hbar = c$.
Then the Yangian $\finY$ acts on $M \otimes_{\bbC \frakS_n} V^{\otimes n}$ by
\begin{align*}
	X (m \otimes \bv) &= \sum_{k=1}^n m \otimes (X)_k \bv, \\
	J(X) (m \otimes \bv) &= \sum_{k=1}^n m y_k^{(n)} \otimes (X)_k \bv
\end{align*}
for $X=X_{i}^{\pm}, H_i$ $(i=1, \ldots, N-1)$ and $m \in M, \, \bv \in V^{\otimes n}$.
\end{thm}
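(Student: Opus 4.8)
The plan is to verify that the two displayed formulas satisfy the defining relations of $\finY$ as expressed through the alternative generators $X_i^\pm, H_i, J(X_i^\pm), J(H_i)$, that is, through Drinfeld's $J$-presentation. Write $\pi(X)(m\otimes\bv)=\sum_{k=1}^n m\otimes(X)_k\bv$ for $X\in\fraksl_N$ and $\pi(J(X))(m\otimes\bv)=\sum_{k=1}^n m\, y_k^{(n)}\otimes(X)_k\bv$, extending $J$ $\bbC$-linearly to all of $\fraksl_N$. Since $N\ge 3$, the $J$-presentation consists of the $\fraksl_N$-relations, the equivariance $[X,J(Y)]=J([X,Y])$, and a single cubic Drinfeld relation of the form
\[
 \sum_{\mathrm{cyc}} \bigl[J(X),J([Y,Z])\bigr] = \hbar^2\cdot(\text{symmetric cubic Casimir term in } X,Y,Z),
\]
the quartic relation being automatic for $N\ge 3$. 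Thus the proof splits into well-definedness, the two easy relations, and the cubic relation.

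First I would settle well-definedness on $M\otimes_{\bbC\frakS_n}V^{\otimes n}$. For $\pi(X)$ this is the classical fact that the diagonal $\fraksl_N$-action commutes with the permutation action, i.e.\ $\sigma(X)_k=(X)_{\sigma(k)}\sigma$. For $\pi(J(X))$ the relevant input is the covariance
\[
 \sigma\, y_k^{(n)} = y_{\sigma(k)}^{(n)}\,\sigma \quad\text{in }\finH,\qquad \sigma\in\frakS_n,
\]
which reduces to the case of a simple reflection $s_i$ and then follows by combining $s_i u_i - u_{i+1}s_i=-c$ with the elementary identity $s_iA_i-A_{i+1}s_i=2$, where $A_k=\sum_{j>k}s_{kj}-\sum_{j<k}s_{jk}$; this is precisely the cancellation for which the symmetrizing term in $y_k^{(n)}$ was designed. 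Granting this, moving $\sigma$ through the tensor product and reindexing shows $\pi(J(X))(m\sigma\otimes\bv)=\pi(J(X))(m\otimes\sigma\bv)$.

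The $\fraksl_N$-relations hold because $\pi$ is the $n$-fold iterated coproduct action. Equivariance is also immediate: since $[(X)_k,(Y)_l]=\delta_{kl}([X,Y])_k$, one finds $[\pi(X),\pi(J(Y))](m\otimes\bv)=\sum_{k}m\, y_k^{(n)}\otimes([X,Y])_k\bv=\pi(J([X,Y]))(m\otimes\bv)$, using no property of the $y_k^{(n)}$ beyond their placement.

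The cubic relation is the heart of the matter and the main obstacle. Computing $[\pi(J(X)),\pi(J(W))](m\otimes\bv)$ and separating the diagonal ($k=l$) from the off-diagonal ($k\ne l$) contributions, the diagonal part equals $\sum_k m\,(y_k^{(n)})^2\otimes([X,W])_k\bv$, which cancels after the cyclic sum by the Jacobi identity; the off-diagonal part collapses to $\tfrac12\sum_{k\ne l} m\,[y_l^{(n)},y_k^{(n)}]\otimes\bigl((X)_k(W)_l-(W)_k(X)_l\bigr)\bv$. One then evaluates $[y_k^{(n)},y_l^{(n)}]$ explicitly in $\finH$; the $u$-parts commute, so only the cross and symmetrization terms survive, producing a multiple of $c^2$ supported on the single transposition $s_{kl}$. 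Finally, transporting $s_{kl}$ across the tensor product via covariance and invoking the bridge identity
\[
 s_{kl} = \sum_{i,j=1}^N (E_{ij})_k (E_{ji})_l \quad\text{on factors } k,l \text{ of } V^{\otimes n}
\]
turns each transposition into the split Casimir of $\gl_N$, which is exactly the shape of the symmetric cubic Casimir term on the right-hand side of Drinfeld's relation. Matching the two sides forces the normalization $\hbar=c$, and the relation holds. I expect the precise evaluation of $[y_k^{(n)},y_l^{(n)}]$ and its identification with the cubic Casimir expression --- the point where the degenerate affine Hecke structure and the $\gl_N$-combinatorics must be reconciled --- to be the only genuinely laborious step; equivariance lets one reduce this verification to a single convenient triple $(X,Y,Z)$ of weight vectors.
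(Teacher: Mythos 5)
First, a remark on the comparison itself: the paper contains no proof of this statement --- it is imported wholesale from Drinfeld's paper [D] --- so your proposal can only be judged as a standalone argument. Its architecture is the standard (and correct) one: realize $\finY$ through the $J$-presentation, where for $N\geq 3$ the quartic relation is a consequence of the cubic one, so that one must check well-definedness over $\bbC\frakS_n$, the $\fraksl_N$-relations, the equivariance $[X,J(Y)]=J([X,Y])$, and the single cubic Drinfeld relation. Your well-definedness argument is correct (the identity $s_iA_i-A_{i+1}s_i=2$ does hold, and combined with $s_iu_i-u_{i+1}s_i=-c$ it gives $s_iy_i^{(n)}=y_{i+1}^{(n)}s_i$), and so are the equivariance computation and the cancellation of the diagonal $k=l$ terms in the cyclic sum by the Jacobi identity.

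The genuine gap is the evaluation of $[y_k^{(n)},y_l^{(n)}]$, the step you yourself single out as the heart of the matter. This commutator is \emph{not} a multiple of $c^2$ supported on the single transposition $s_{kl}$. Already for $n=3$ a direct computation from the relations (\ref{eq:H1}) and (\ref{eq:H3}) gives
\[
	[y_1^{(3)},y_2^{(3)}] = \frac{c^2}{4}\left(s_{12}s_{23}-s_{23}s_{12}\right),
\]
a combination of the two $3$-cycles of $\frakS_3$ with no $s_{12}$-component whatsoever (and for $n=2$ the commutator vanishes identically). In general $[y_k^{(n)},y_l^{(n)}]$ is a $c^2$-multiple of a linear combination of the products $s_{kj}s_{jl}$ and $s_{jl}s_{kj}$ over the third index $j\neq k,l$, i.e.\ it is supported on $3$-cycles through $k$ and $l$. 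This is not a repairable slip but the very mechanism of the theorem: a $3$-cycle, transported to $V^{\otimes n}$, acts as $P_{kj}P_{jl}$, i.e.\ as a sum of triple products $(E_{ab})_k(E_{bc})_j(E_{ca})_l$ involving a \emph{third} tensor factor, and it is exactly these cubic expressions that can match the symmetrized cubic Casimir term on the right-hand side of Drinfeld's relation. Your transposition $s_{kl}$, transported via $P_{kl}=\sum_{i,j}(E_{ij})_k(E_{ji})_l$, produces an expression only quadratic in the matrix units, which can never equal the nonzero (since $N\geq 3$) cubic right-hand side; with your commutator the verification would simply fail. So the proposal breaks down precisely at the laborious step it defers, and carrying out that commutator computation correctly is essentially the entire content of Drinfeld's theorem.
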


It is easy to see that, for a right $\hat\frakS_n$-module $M$, the Lie algebra $\fraksl_N[z^{\pm 1}]$ acts on $M \otimes_{\bbC \frakS_n} V^{\otimes n}$ by
\[
	Xz^r (m \otimes \bv) = \sum_{k=1}^n m x_k^r \otimes (X)_k \bv
\]
for $X \in \fraksl_N$ and $r \in \bbZ$.

Guay proved in \cite{MR2199856} that the actions of $\finY$ and $\affsl$ (with trivial center) is glued and extended to an action of the affine Yangian $\affY$ provided $M$ is an $\affH$-module.
We recall Guay's argument.
Let $M$ be a right $\affH$-module.
Define a bijective $\bbC$-linear endomorphism $T$ of $M \otimes_{\bbC \frakS_n} V^{\otimes n}$ by
\begin{equation}\label{eq:T}
	T (m \otimes v_{a_1} \otimes \cdots \otimes v_{a_n} ) = (m x_{1}^{-\delta_{a_1,N}} \cdots x_{n}^{-\delta_{a_n,N}}) \otimes v_{a_1 + 1} \otimes \cdots \otimes v_{a_n + 1}.
\end{equation}

\begin{lem}[{\cite[Lemma 6.2]{MR2199856}}]\label{lem:Guay}
Set $\hbar = c$ and $\beta = t/2 - Nc/4 + c/2$.
Then the bijection $T$ satisfies the condition in Proposition~\ref{prop:sufficient_condition}.
\end{lem}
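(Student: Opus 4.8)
The plan is to check the four families of relations in Proposition~\ref{prop:sufficient_condition} on a finite generating set and then propagate them. For $i = 2, \ldots, N-1$ the automorphism $\rho$ shifts the index $i \mapsto i-1 \in \{1, \ldots, N-2\}$, so it carries the subalgebra of $\finY$ generated by the $X_{i,r}^\pm, H_{i,r}$ back into $\finY$; consequently both $\varphi \circ \rho$ and $a \mapsto T^{-1}\varphi(a)T$ are algebra homomorphisms there, and similarly $\rho^2$ sends the index $1$ to $N-1$, again staying inside $\finY$. Since $\{X_i^\pm, H_i, J(X_i^\pm), J(H_i)\}$ is a generating set for $\finY$, it is enough to verify the relations on these elements; and because \eqref{eq:Y2} gives $H_{i,r} = [X_{i,r}^+, X_{i,0}^-]$, the relations for the $H$-generators follow from those for the $X$-generators. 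Thus I reduce to the degree-$0$ generators $X_i^\pm$ and the degree-$1$ generators $J(X_i^\pm)$.

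For the degree-$0$ generators I conjugate the matrix-unit operators $(X)_k$ by $T$ directly on a basis vector. Tracking the $x$-factors in \eqref{eq:T} gives, for instance,
\[
	T^{-1}(E_{i,i+1})_k\,T = (E_{i-1,i})_k \quad (2 \le i \le N-1), \qquad T^{-1}(E_{1,2})_k\,T = x_k\,(E_{N,1})_k,
\]
with analogous formulas for the lowering operators and for $H_i$. Summing over $k$ and comparing with the loop action $Xz^r(m \otimes \bv) = \sum_k m x_k^r \otimes (X)_k \bv$ shows that $T$ realizes the rotation of the affine Dynkin diagram geometrically: the shift $v_a \mapsto v_{a+1}$ relabels the simple root vectors, and only the passage from $v_N$ to $v_1$ produces an $x_k$, which is exactly the loop variable turning $E_{1,2}$ into $e_0 = E_{N,1}z$. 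This establishes the degree-$0$ relations for $i = 2, \ldots, N-1$; composing the computation twice gives the $\rho^2$/$T^2$ version required for $i = 1$, the net loop grading again bookkeeping correctly.

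The essential step is the degree-$1$ case. Writing $\varphi(J(X)) = \sum_k (\text{right multiplication by } y_k^{(n)}) \circ (X)_k$, conjugation by $T$ splits into the degree-$0$ conjugation of $(X)_k$, already handled, and a configuration-dependent conjugation of the spectral operator. Bookkeeping the $x$-factors in \eqref{eq:T} yields, on the block with $V$-configuration $(a_1, \ldots, a_n)$,
\[
	T^{-1}\big(\text{mult. } y_k^{(n)}\big)\,T = \text{mult.}\big(P^{-1} y_k^{(n)} P\big), \qquad P = \prod_{j:\, a_j = N} x_j .
\]
I then write $P^{-1}y_k^{(n)}P = y_k^{(n)} + P^{-1}[y_k^{(n)}, P]$ and evaluate the commutator using the relations \eqref{eq:H3} and \eqref{eq:H4} of $\affH$; combined with the transformation of the quadratic $\gl_N$-correction terms defining $J(X_i^\pm)$ under the Dynkin rotation realized above, the whole correction collapses to a scalar shift of the Yangian spectral parameter. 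Matching that scalar with the coefficient in $\rho$---namely $\hbar/2$ at an interior node and $\beta$ across the boundary---forces $\hbar = c$ for the interior, while the boundary, which now feels the $t x_k$ term and the $c$-summation of \eqref{eq:H4} together with the $N$-fold $\gl_N$-anomaly, produces $t/2 - Nc/4 + c/2$, that is $\beta$.

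The main obstacle is this last evaluation. Because $P$ couples the $k$-th strand to every strand with $a_j = N$, the commutator $[y_k^{(n)}, P]$ is an $n$-body expression whose transposition terms must be shown to cancel against the symmetrising sum $\frac{c}{2}(\sum_{k<j} s_{kj} - \sum_{j<k} s_{jk})$ built into $y_k^{(n)}$, leaving a configuration- and $k$-independent scalar---only then does it correspond to a genuine shift of the Yangian spectral parameter. Organising these cancellations and isolating the residual constant $t/2 - Nc/4 + c/2$ is the technical heart of the argument; once it is done, the hypotheses of Proposition~\ref{prop:sufficient_condition} are met and the proposition produces the asserted $\affY$-action.
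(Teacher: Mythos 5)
First, a point of reference: the paper does not actually prove Lemma~\ref{lem:Guay} --- it is imported verbatim from Guay (Lemma 6.2 of \cite{MR2199856}), so there is no internal proof to compare yours against; your proposal has to stand on its own as a reconstruction of Guay's computation.

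Your reduction to a finite generating set is legitimate, and your degree-zero computation is correct: tracking \eqref{eq:T} gives $T^{-1}(E_{i,i+1})_k\,T = (E_{i-1,i})_k$ for $2 \le i \le N-1$ and $T^{-1}(E_{1,2})_k\,T = x_k\,(E_{N,1})_k$, and in the $T^2$ computation for node $1$ the loop factors $x_k$ and $x_k^{-1}$ indeed cancel, so $T$ realizes the Dynkin rotation on the level-zero action. The conjugation formula $T^{-1}\bigl(\text{mult. } y_k^{(n)}\bigr)T = \text{mult.}\bigl(P^{-1} y_k^{(n)} P\bigr)$ on a fixed $V$-configuration is also formally right. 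But the proof stops exactly where the lemma begins. The entire content of the statement is the pair of identifications $\hbar = c$ and $\beta = t/2 - Nc/4 + c/2$, and these can only emerge from the evaluation you defer: expanding $P^{-1}[y_k^{(n)}, P]$ via \eqref{eq:H3} and \eqref{eq:H4}, moving the resulting transpositions across the tensor product over $\bbC \frakS_n$ (note that $P^{-1}y_k^{(n)}P - y_k^{(n)}$ is genuinely \emph{not} a scalar in $\affH$; the collapse can only happen after composing with $(X)_k$, summing over $k$, and using which configurations $(X_i^{\pm})_k$ annihilates), and adding in the rotated quadratic $\gl_N$-correction terms that distinguish $J(X_i^{\pm})$ from $X_{i,1}^{\pm}$. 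You name this step ``the technical heart'' and then assert its outcome --- including the precise constant $t/2 - Nc/4 + c/2$ --- without performing it. That is assuming the conclusion: a priori the residual terms could fail to be configuration- and $k$-independent, or the constant could come out differently, and ruling this out is precisely what the lemma asserts. As it stands, the proposal is a correct setup and strategy (essentially the one Guay follows), but not a proof.
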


\begin{thm}[{\cite[Theorem 5.4]{MR2199856}}]\label{thm:Schur-Weyl}
Let $M$ be a right $\affH$-module and set $\hbar = c$ and $\beta = t/2 - Nc/4 + c/2$.
Then the affine Yangian $\affY$ acts on $M \otimes_{\bbC \frakS_n} V^{\otimes n}$ by extending the actions of $\finY$ and $\affsl$.
\end{thm}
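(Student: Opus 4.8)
The plan is to assemble the $\affY$-action from the three tools already in place and then invoke the abstract extension criterion of Proposition~\ref{prop:sufficient_condition}. First I would restrict the right $\affH$-module $M$ in two complementary ways. Restricting along the embedding $\finH \hookrightarrow \affH$ (the subalgebra generated by the $s_i$ and $u_i$) makes $M$ a right $\finH$-module, so by Theorem~\cite{MR831053} with $\hbar = c$ we obtain a $\finY$-action $\varphi$ on $M \otimes_{\bbC\frakS_n} V^{\otimes n}$. Restricting along $\bbC\hat\frakS_n \hookrightarrow \affH$ (the subalgebra generated by the $s_i$ and $x_i^{\pm 1}$) makes $M$ a right $\hat\frakS_n$-module, so by the loop-algebra action displayed just before \eqref{eq:T} we obtain an action of $\fraksl_N[z^{\pm1}]$, hence of $\affsl$ with trivial central charge, on the same space.

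Next I would feed $\varphi$ together with the explicit bijection $T$ of \eqref{eq:T} into Proposition~\ref{prop:sufficient_condition}. Lemma~\ref{lem:Guay} asserts precisely that, under the identifications $\hbar = c$ and $\beta = t/2 - Nc/4 + c/2$, this $T$ satisfies the conjugation hypotheses of that proposition. Consequently the proposition produces an $\affY$-module structure on $M \otimes_{\bbC\frakS_n} V^{\otimes n}$ extending $\varphi$; in particular its restriction to $\affY^{(1)} \cong \finY$ recovers the given Yangian action, so the extension of $\finY$ is automatic and requires no further argument.

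It then remains to check that the degree-zero subalgebra $\affY^{(2)}$ acts by the $\affsl$-action coming from the $\hat\frakS_n$-structure. For $i = 1, \ldots, N-1$ the generators $X_{i,0}^\pm$ and $H_{i,0}$ already act by $X(m\otimes\bv) = \sum_k m \otimes (X)_k \bv$, which is exactly the $z^0$-component of $\fraksl_N[z^{\pm1}]$, so only the $i=0$ generators need attention. Here the automorphism $\rho$ sends the index $0$ to $N-1$ in degree zero, so Proposition~\ref{prop:sufficient_condition} gives $\varphi(X_{0,0}^\pm) = T\,\varphi(X_{N-1}^\pm)\,T^{-1}$ and $\varphi(H_{0,0}) = T\,\varphi(H_{N-1})\,T^{-1}$. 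The one genuine computation is to verify that conjugating $\sum_k (E_{N-1,N})_k$ by $T$ yields $\sum_k m\, x_k \otimes (E_{N,1})_k \bv$, the operator $E_{N,1}z$, and likewise that the lowering and Cartan generators map to $E_{1,N}z^{-1}$ and $E_{N,N}-E_{1,1}$; in this computation the index shift $v_{a_k}\mapsto v_{a_k+1}$ modulo $N$ implements the matrix rotation $E_{N-1,N}\mapsto E_{N,1}$, while the monomials $x_k^{-\delta_{a_k,N}}$ occurring in $T$ and $T^{-1}$ combine to produce exactly the loop degree $z^{\pm1}$. Tracking these shift exponents against the cyclic wrap of indices is the main obstacle, and it is the precise point at which the particular form of $T$ is indispensable; granting it, the $i=0$ generators coincide with the affine Chevalley generators of $\affsl$, and the $\affsl$-action is extended as claimed.
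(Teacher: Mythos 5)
Your proposal is correct and follows essentially the same route as the paper (which simply recalls Guay's argument): obtain the $\finY$-action from Drinfeld's theorem with $\hbar=c$, obtain the level-zero $\affsl$-action from the $\hat\frakS_n$-structure, and apply Proposition~\ref{prop:sufficient_condition} to the map $T$ of \eqref{eq:T}, whose hypotheses are exactly what Lemma~\ref{lem:Guay} provides under $\beta = t/2 - Nc/4 + c/2$. Your closing computation that $T\,\varphi(X_{N-1}^{\pm})\,T^{-1}$ and $T\,\varphi(H_{N-1})\,T^{-1}$ realize $E_{N,1}z$, $E_{1,N}z^{-1}$ and $E_{N,N}-E_{1,1}$ is a correct (and welcome) elaboration of the gluing assertion that the paper, following Guay, leaves implicit.
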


\section{Higher level Fock spaces}\label{sec:Fock_space}

\subsection{Semi-infinite wedge construction}

We follow \cite{MR3202706} to construct higher level Fock spaces via semi-infinite wedge spaces.
Fix an integer $L \geq 1$.
Let
\[
	V= \bigoplus_{a=1}^N \bbC v_a, \quad W= \bigoplus_{b=1}^L \bbC w_a
\]
be the vector representations of $\gl_N$ and $\gl_L$ with standard bases.
Set
\[
	U= \bbC[z^{\pm1}] \otimes W \otimes V
\]
and denote an element $z^m \otimes w_b \otimes v_a$ by $z^m w_b v_a$.
Define $\bu_k \in U$ ($k \in \bbZ$) by
\[
	z^m w_b v_a = \bu_{a - N(b + Lm)}.
\]
Then $\{ \bu_k \ (k \in \bbZ) \}$ forms a basis of $U$.
The wedge space	$\bigwedge^n U$ has a basis $\{ \bu_{k_1} \wedge \cdots \wedge \bu_{k_n} \ (k_1 > \cdots > k_n) \}$.
For a sequence $\uk = (k_1, \ldots, k_n)$, we put
\[
	\bu_{\uk} = \bu_{k_1} \wedge \cdots \wedge \bu_{k_n}.
\]
Fix an integer $M$.
For $n < m$, we define a $\bbC$-linear map $\bigwedge^n U \to \bigwedge^m U$ by
\[
	v \mapsto v \wedge \bu_{M-n} \wedge \bu_{M-n-1} \wedge \cdots \wedge \bu_{M-m+1}.
\]
These form an inductive system.
We define
\[
	F_M = \varinjlim_n \bigwedge^n U.
\]
Put
\[
	\cM= \left\{ \uk = (k_1, k_2, \ldots)\ \middle|
	\begin{array}{l}
		k_i \in \bbZ \text{ and } k_i > k_{i+1} \text{ for all }i, \\
		k_i = M-i+1 \text{ for all but finitely many }i
	\end{array}\right\}.
\]
Then $F_M$ has a basis consisting of vectors
\[
	\bu_{\uk} = \bu_{k_1} \wedge \bu_{k_2} \wedge \cdots
\]
for $\uk=(k_1, k_2, \ldots) \in \cM$.
For an integer $m \in \bbZ$, put
\[
	\ket{m} = \bu_m \wedge \bu_{m-1} \wedge \cdots.
\]
By the definition, every element of $F_M$ is of the form $v \wedge \ket{M-n}$ for some $n \geq 0$ and $v \in \bigwedge^n U$.

Let
\[
	\cP= \left\{ \lambda=(\lambda_1, \lambda_2, \ldots)\ \middle|
	\begin{array}{l}
		\lambda_i \in \bbZ_{\geq 0} \text{ and } \lambda_i \geq \lambda_{i+1} \text{ for all }i, \\
		\lambda_i = 0 \text{ for all but finitely many } i
	\end{array}\right\}
\]
be the set of partitions.
We associate a partition $\lambda \in \cP$ with $\uk \in \cM$ by
\[
	k_i = M + \lambda_i - i + 1
\]
for every $i$.
This gives a bijection between $\cM$ and $\cP$.
We write $\ket{\lambda,M} = \bu_{\uk}$ via this correspondence.

We decompose $F_M$ using a variant of $L$-quotient for partitions.
Take a basis element $\bu_{\uk}$ ($\uk \in \cM$) of $F_M$.
Recall that
\[
	\bu_{k_i} = z^{m_i} w_{b_i} v_{a_i},
\]
where
\[
	k_i = a_i - N(b_i + Lm_i).
\]
For each $s = 1, \ldots, L$, define $j_1^{(s)} < j_2^{(s)} < \cdots$ to be the indices satisfying
\[
	s= b_{j_1^{(s)}} = b_{j_2^{(s)}} = \cdots.
\]
Put $a_i^{(s)} = a_{j_i^{(s)}}$, $m_i^{(s)} = m_{j_i^{(s)}}$, $k_i^{(s)} = a_i^{(s)} - N m_i^{(s)}$.
Then it is easy to see that there exists $c_s \in \bbZ$ for each $s=1, \ldots, L$ such that
\[
	k_i^{(s)} = c_s - i + 1
\]
for all but finitely many $i$.
These $\{ c_s \}$ satisfy $c_1 + \cdots + c_L = M$.
Define an $L$-partition $\ulambda=(\lambda^{(1)}, \ldots, \lambda^{(L)}) \in \cP^L$ by
\[
	k_i^{(s)} = c_s + \lambda_i^{(s)} - i + 1.
\]
The assignment
\[
	\cP \ \to \ \cP^L \times \{ \uc = (c_1, \ldots, c_L) \in \bbZ^L \mid c_1 + \cdots + c_L = M \}, \quad \lambda \mapsto (\ulambda, \uc)
\]
gives a bijection.
See \cite[Remark 4.2 (ii)]{MR1768086} for a relation to the operations of $L$-quotient and $L$-core for partitions.
We write $\ket{\ulambda, \uc} = \ket{\lambda,M} = \bu_{\uk}$ via this correspondence.
Set
\[
	F(\uc) = \bigoplus_{\ulambda \in \cP^L} \bbC \ket{\ulambda, \uc}
\]
for each $\uc$.
This gives a decomposition
\[
	F_M = \bigoplus_{\substack{\uc \in \bbZ^L\\ c_1 + \cdots + c_L = M}} F(\uc).
\]

\subsection{Affine Lie algebra action}

A natural action of $\fraksl_N[z^{\pm 1}]$ on $\bbC[z^{\pm 1}] \otimes V$ is given by
\[
	Xz^r (z^m \otimes v) = z^{m+r} \otimes Xv.
\]
This yields a level $L$ action of the affine Lie algebra $\affsl$ on $F_M$.
The action preserves each component $F(\uc)$ and is described in terms of combinatorics of $L$-partitions.
We identify each partition $\lambda \in \cP$ with its Young diagram
\[
	\lambda = \{ (x,y) \in (\bbZ_{>0})^2 \mid x=1, \ldots, l(\lambda),\ y=1, \ldots, \lambda_x \}.
\]
Here $l(\lambda)$ denotes the length of $\lambda$.
Take $(\ulambda, \uc) \in \cP^L \times \{ \uc \in \bbZ^L \mid c_1 + \cdots + c_L = M \}$ and fix $s = 1, \ldots, L$.
We call $(x,y) \in (\bbZ_{>0})^2$ an $i$-cell if
\[
	c_s + y - x \equiv i \mod N.
\]
An $i$-cell $(x,y)$ is said to be removable if $(x,y) \in \lambda^{(s)}$ and $\lambda^{(s)} \setminus \{(x,y)\}$ is a partition.
It is said to be addable if $(x,y) \notin \lambda^{(s)}$ and $\lambda^{(s)} \cup \{(x,y)\}$ is a partition.

\begin{prop}[{\cite[Proposition 4]{MR3202706}}]
The action of $\affsl$ on $F(\uc) = \bigoplus_{\ulambda \in \cP^L} \bbC \ket{\ulambda,\uc}$ is given by
\[
	X_i^+ \ket{\ulambda,\uc} = \sum_{\umu} \ket{\umu,\uc},
\]
where the sum is over all $L$-partitions $\umu$ obtained from $\ulambda$ by removing $i$-cell,
\[
	X_i^- \ket{\ulambda,\uc} = \sum_{\umu} \ket{\umu,\uc},
\]
where the sum is over all $L$-partitions $\umu$ obtained from $\ulambda$ by adding $i$-cell, and
\[
	H_i \ket{\ulambda,\uc} = \Big( \#\{ \text{\rm addable $i$-cells in $\ulambda$} \} - \#\{ \text{\rm removable $i$-cells in $\ulambda$} \} \Big) \ket{\ulambda,\uc}.
\]
\end{prop}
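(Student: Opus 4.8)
The plan is to compute the action of the Chevalley generators of $\affsl$ directly on the wedge basis $\{\bu_{\uk}\}_{\uk \in \cM}$ and then to reinterpret the result through the $L$-quotient bijection $\uk \leftrightarrow (\ulambda, \uc)$. Writing the affine generators inside $\fraksl_N[z^{\pm 1}]$ as $X_i^{+} = E_{i,i+1}$, $X_i^{-} = E_{i+1,i}$ for $i = 1, \ldots, N-1$ and $X_0^{+} = E_{N,1}z$, $X_0^{-} = E_{1,N}z^{-1}$, I would first record how each acts on a single vector $\bu_k = z^m w_b v_a$. The key observation is that all of them move $\bu_k$ within a single $W$-component $s = b$ and shift the reduced index $k^{(s)} = a - Nm$ by exactly $\mp 1$: for $i \neq 0$ this is immediate since only $a$ changes by one, while for $i = 0$ the colour wraps around ($a = 1 \mapsto a = N$, or the reverse) and the accompanying shift $z^{\pm 1}$ in $m$ compensates so that the net change of $k^{(s)}$ is again $\mp 1$. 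In each case $X_i^{+}$ is nonzero only when $a \equiv i+1$ and $X_i^{-}$ only when $a \equiv i \pmod N$.

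Next I would extend this to $\bigwedge^{n} U$, and hence to $F_M$, by the Leibniz rule: on $\bu_{\uk} = \bu_{k_1} \wedge \bu_{k_2} \wedge \cdots$ the operator $X_i^{\pm}$ acts as the sum over the positions $j$ where the single-vector action is nonzero, replacing $\bu_{k_j}$ by the shifted basis vector. This sum is finite on each basis element because only finitely many $k_j$ differ from the vacuum tail. Under the bijection $\uk \leftrightarrow (\ulambda, \uc)$, decreasing $k^{(s)}_{j}$ by one is exactly removing the last box of the corresponding row of $\lambda^{(s)}$, and increasing it by one is adding a box; a direct content computation shows that this box satisfies $c_s + \lambda^{(s)}_{j} - j \equiv i \pmod N$, so it is precisely an $i$-cell. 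The condition that the shifted term does not vanish — that the new index collides with no other $k_{j'}$ — translates, after checking that any such collision must occur inside the same component $s$, into the statement that the box is removable (resp.\ addable). This yields the two displayed sums for $X_i^{+}$ and $X_i^{-}$.

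The step I expect to be the main obstacle is the control of the signs coming from reordering the wedge for the affine generators $X_0^{\pm}$. For $i \neq 0$ the new index $k_j \mp 1$ is still adjacent to its neighbours, so no reordering is needed and the coefficient is $+1$; but for $i = 0$ the full index $k_j$ jumps by $1 + N(L-1)$, so restoring the decreasing order drags the shifted factor across the intervening slots, producing a Koszul sign whose parity is governed by the interleaving of the $L$ components dictated by $k = a - N(b + Lm)$. The careful point is to verify that these signs recombine to $+1$ on every surviving term — equivalently, that the normalization of the vectors $\ket{\ulambda, \uc}$ is chosen to absorb them — and this is the heart of the computation.

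Finally, for the Cartan generators I would avoid a separate diagonal computation (and the central term hidden in $H_0$) by using the Chevalley relation $H_i = [X_i^{+}, X_i^{-}]$, which holds in $\affsl$ for every $i \in \bbZ/N\bbZ$. Applying the two formulas just obtained, the coefficient of $\ket{\ulambda, \uc}$ in $X_i^{+}X_i^{-}\ket{\ulambda, \uc}$ counts the addable $i$-cells (add a box and then remove the same one, so that the two reorderings cancel) and the coefficient in $X_i^{-}X_i^{+}\ket{\ulambda, \uc}$ counts the removable $i$-cells; all off-diagonal contributions cancel because $H_i$ acts diagonally on the basis. Hence $H_i \ket{\ulambda, \uc} = (\#\{\text{addable } i\text{-cells}\} - \#\{\text{removable } i\text{-cells}\})\ket{\ulambda, \uc}$, and this last identity is insensitive to the sign normalization discussed above.
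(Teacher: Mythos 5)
Your overall route is the natural one (and, for what it is worth, the paper itself offers no proof of this proposition: it simply quotes Leclerc's lecture notes, so there is nothing internal to compare against). The correct parts of your outline are: the factor-by-factor computation of the Chevalley generators, with $X_0^{+} = E_{N,1}z$, $X_0^{-} = E_{1,N}z^{-1}$ forced by the requirement that each generator shift the reduced index $k^{(s)} = a - Nm$ by $\mp 1$; the translation of such shifts into removal/addition of boxes via the beta-numbers $k_i^{(s)} = c_s + \lambda_i^{(s)} - i + 1$; the observation that collisions can only occur inside one component $s$; and the commutator trick $H_i = [X_i^{+}, X_i^{-}]$, whose diagonal coefficient is indeed insensitive to signs because adding a box and then removing the same box produces two identical Koszul signs whose product is $1$.

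The genuine gap is precisely the point you defer as ``the heart of the computation'': the claim that the reordering signs for $X_0^{\pm}$ recombine to $+1$. This is not a routine verification, and in fact it \emph{fails} for the raw wedge basis under the paper's stated conventions, so it cannot be waved through. Concretely, take $N=3$, $L=2$, $M=0$, so $\ket{0} = \bu_0 \wedge \bu_{-1} \wedge \bu_{-2} \wedge \cdots$, and apply $X_0^{-} = E_{1,N}z^{-1}$, which sends $\bu_k$ (with $a = N$) to $\bu_{k+1+N(L-1)} = \bu_{k+4}$. The factor $\bu_0 = z^0 w_1 v_3$ gives $\bu_4$, which needs no reordering (sign $+1$); the factor $\bu_{-3} = z^0 w_2 v_3$ gives $\bu_1 = z^{-1}w_2v_1$, which must be dragged past the three occupied slots $\bu_0, \bu_{-1}, \bu_{-2}$, giving sign $(-1)^3 = -1$. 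Both terms survive (no collision), and they correspond exactly to the two addable $0$-cells, so the naive Leibniz action yields $\ket{(\square,\emptyset),\uc} - \ket{(\emptyset,\square),\uc}$ rather than the asserted sum. Thus the number of intervening occupied slots is not automatically even, and to obtain the proposition one must either exhibit a globally consistent sign normalization $\epsilon(\ulambda) \in \{\pm 1\}$ of the basis (a cocycle condition over the entire graph of $L$-partitions, which moreover conflicts with the paper's literal definition $\ket{\ulambda,\uc} = \bu_{\uk}$) or set up the index and ordering conventions so that the signs disappear. Supplying that argument is the entire nontrivial content of the statement for $L \geq 2$; without it, the two displayed formulas for $X_i^{\pm}$, with every coefficient equal to $1$, are not established. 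A smaller point: your cancellation of the off-diagonal terms in $[X_i^{+}, X_i^{-}]$ is circular as written (``because $H_i$ acts diagonally''); either deduce diagonality from the wedge construction, or argue directly that for distinct boxes $A \neq B$ the term ``add $A$, then remove $B$'' in $X_i^{+}X_i^{-}$ matches bijectively, with the same coefficient, the term ``remove $B$, then add $A$'' in $X_i^{-}X_i^{+}$.
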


We call the $\affsl$-module $F(\uc)$ the level $L$ Fock space with multi-charge $\uc$.

\subsection{Yangian action}

We construct an action of the Yangian $\finY$ on each $F(\uc)$ following \cite{uglov}.

We introduce some operators acting on the space $\bbC[z_1^{\pm 1}, \ldots, z_n^{\pm 1}] \otimes W^{\otimes n}$.
Let $K_{ij}$ be the permutation of $z_i$ and $z_j$.
Let $P_{ij}$ be the permutation of the $i$-th and $j$-th factors of $W^{\otimes n}$.
Put
\[
	r_{ij} = \dfrac{1}{2} \sum_{a=1}^L (E_{aa})_i (E_{aa})_j + \sum_{1 \leq a < b \leq L} (E_{ab})_i (E_{ba})_j.
\]
Fix an $L$-tuple of complex parameters $\nu(1), \ldots, \nu(L)$ and define a matrix $\nu$ by $\nu = \sum_{a=1}^L \nu(a) E_{aa}$.
Put
\[
	\nu_i = 1^{\otimes i-1} \otimes \nu \otimes 1^{\otimes n-i} \in \End W^{\otimes n}.
\]
A matrix Dunkl-Cherednik operator $d_i^{(n)}$ for $i=1, \ldots, n$ is defined by
\begin{equation*}
	\begin{split}
		& d_i^{(n)} = t z_i \dfrac{\partial}{\partial z_i} \\
		& \quad -  c \Bigg( \nu_i + \dfrac{n}{2L} - \dfrac{1}{2}  + \sum_{j < i} \Big( \dfrac{z_i}{z_i-z_j} (1 - K_{ji})P_{ji} - r_{ji} \Big ) + \sum_{i < j} \Big( \dfrac{z_j}{z_i-z_j} (1 - K_{ij})P_{ij} + r_{ij} \Big ) \Bigg),
	\end{split}
\end{equation*}
which acts on $\bbC[z_1^{\pm 1}, \ldots, z_n^{\pm 1}] \otimes W^{\otimes n}$.

\begin{prop}[{\cite[Proposition 2.8]{uglov}}]
The assignment 
\[
	s_i \mapsto - K_{i,i+1} P_{i,i+1}, \quad x_i \mapsto z_i, \quad	u_i \mapsto - d_i^{(n)}
\]
gives a right action of the degenerate double affine Hecke algebra $\affH$ on $\bbC[z_1^{\pm 1}, \ldots, z_n^{\pm 1}] \otimes W^{\otimes n}$.
\end{prop}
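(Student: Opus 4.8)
The plan is to verify the defining relations (\ref{eq:H1})--(\ref{eq:H4}) of $\affH$ directly on the proposed operators. Since we want a \emph{right} action while the operators act on the left of $\bbC[z_1^{\pm 1}, \ldots, z_n^{\pm 1}] \otimes W^{\otimes n}$, the assignment $\pi$ sending $s_i, x_i, u_i$ to $-K_{i,i+1}P_{i,i+1}, z_i, -d_i^{(n)}$ must be an \emph{anti}-homomorphism; that is, every relation $ab = a'b'$ of $\affH$ is to be checked as $\pi(b)\pi(a) = \pi(b')\pi(a')$. Relations symmetric under reversal are unaffected, but the inhomogeneous relations in (\ref{eq:H3}) and (\ref{eq:H4}) genuinely require this reversal: for instance the identity to be checked for $[u_i, x_i]$ is $[\pi(x_i), \pi(u_i)] = \pi(\text{RHS})$, with the order of $\pi(x_i)$ and $\pi(u_i)$ opposite to the naive one.

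Writing $\sigma_i = K_{i,i+1}P_{i,i+1}$, so that $\pi(s_i) = -\sigma_i$, I would first dispose of (\ref{eq:H1}) and (\ref{eq:H2}). Because $K_{i,i+1}$ and $P_{i,i+1}$ act on the independent tensor factors $\bbC[z^{\pm 1}]$ and $W^{\otimes n}$, they commute, each is an involution, and each family obeys the braid and locality relations; hence $\sigma_i$ inherits all of (\ref{eq:H1}), the sign cancelling in $s_i^2 = 1$ and appearing as $(-1)^3$ on both sides of the braid relation. For (\ref{eq:H2}) the $x_i = z_i$ are commuting invertible multiplication operators, and $s_i x_i = x_{i+1}s_i$ holds because $K_{i,i+1}$ intertwines multiplication by $z_i$ and $z_{i+1}$ while $P_{i,i+1}$ commutes with every $z_k$.

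Next come the relations involving the Dunkl--Cherednik operators. The commutations $s_i u_j = u_j s_i$ for $j \neq i, i+1$ and the ``far-index'' cases of (\ref{eq:H4}) are two-body and follow from locality, since $\sigma_i$ meets only the $j = i, i+1$ summands of $d_\ell^{(n)}$ and such a summand commutes with $z_k$ for $k$ distinct from its two indices. For the key relation $s_i u_i - u_{i+1}s_i = -c$ I would conjugate $d_i^{(n)}$ by $\sigma_i$: the term $t z_i \partial_{z_i}$, the $\nu_i$-scalar, and every summand with $j \neq i, i+1$ are merely relabeled $i \leftrightarrow i+1$ and reproduce $d_{i+1}^{(n)}$, while the single $(i,i+1)$-summand leaves a residue. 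Here the rational identity $\frac{z_i - z_{i+1}}{z_{i+1} - z_i} = -1$ together with the $r$-matrix identity $r_{i,i+1} + r_{i+1,i} = P_{i,i+1}$ (note $\sigma_i r_{i,i+1}\sigma_i = r_{i+1,i}$) collapses the leftover to $-c\,\sigma_i$, exactly $\pi$ of the right-hand side. The remaining cases of (\ref{eq:H4}) I would obtain by computing $[z_j, d_i^{(n)}]$ head-on: the part $t z_i \partial_{z_i}$ produces the $t x_i$-term, and each rational summand contributes through $[z_j, K_{jk}] = (z_j - z_k)K_{jk}$, whose linear factor cancels the pole $(z_i - z_j)^{-1}$ and leaves precisely the permutation terms $\pi(x_\bullet s_\bullet)$ demanded by (\ref{eq:H4}).

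The one substantial point is the commutativity $u_i u_j = u_j u_i$, i.e. $[d_i^{(n)}, d_j^{(n)}] = 0$, which I expect to be the main obstacle. Unlike everything above it is not a two-body identity: expanding the commutator generates three-index terms in which a pole $\frac{z_i}{z_i - z_j}$ interacts with a third variable $z_k$, and these must cancel among themselves. The clean route is to use that the $r_{ij}$ form a classical $r$-matrix --- the unitarity $r_{ij} + r_{ji} = P_{ij}$ exploited above, together with a classical Yang--Baxter type relation among $r_{ij}, r_{ik}, r_{jk}$ --- and to combine this with the braid relations of the $K_{ij}P_{ij}$, so that the residues at the three families of poles vanish separately; this is the matrix analogue of Cherednik's classical verification that the scalar ($L = 1$) Dunkl operators commute. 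Once $[d_i^{(n)}, d_j^{(n)}] = 0$ is in hand, all of (\ref{eq:H3}) is established and the proposition follows.
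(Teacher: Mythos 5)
Your proposal is correct in the parts it carries out, but it takes a genuinely different route from the paper — indeed, the paper does not verify these relations at all. It quotes the statement from \cite[Proposition 2.8]{uglov} and adds only a remark on conventions: Uglov's formulas define a \emph{left} $\affH$-action, and the paper converts it to the stated right action via the anti-automorphism $s_i \mapsto s_i$, $x_i \mapsto x_i^{-1}$, $u_i \mapsto u_i$, which interchanges left and right $\affH$-modules. You instead prove the proposition from scratch, and you correctly handle the very point that remark is about: since the operators act on the left, a right action forces the assignment to be an anti-homomorphism, so the inhomogeneous relations in (\ref{eq:H3}) and (\ref{eq:H4}) must be checked in reversed order. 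Your key computations are sound: conjugating $d_i^{(n)}$ by $\sigma_i = K_{i,i+1}P_{i,i+1}$ relabels every summand except the $(i,i+1)$ one, and the leftover $-(1-K_{i,i+1})P_{i,i+1} + r_{i+1,i} + r_{i,i+1}$ collapses to $\sigma_i$ by the unitarity $r_{ij} + r_{ji} = P_{ij}$, yielding $s_i u_i - u_{i+1} s_i = -c$ with the right sign; likewise $[z_j, K_{jk}] = (z_j - z_k) K_{jk}$ cancels the poles in (\ref{eq:H4}). What each approach buys: the paper's is a two-line reduction that outsources all analytic content to Uglov, while yours is self-contained and exposes which structural facts about $r_{ij}$ (unitarity, classical Yang--Baxter) drive the result. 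The caveat is that the hardest relation, $[d_i^{(n)}, d_j^{(n)}] = 0$, remains in your write-up a strategy rather than a proof: you correctly identify that the three-index terms must cancel via the classical Yang--Baxter equation together with unitarity and the braid relations — the standard Cherednik-style argument — but this is exactly the content the paper never has to touch, so if your verification is meant to replace the citation, that computation must be written out in full.
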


\begin{rem}
A formula for the action given in \cite{uglov} is slightly different.
Note that there exists an anti-automorphism of $\affH$ defined by
\[
	s_i \mapsto s_i, \quad x_i \mapsto x_i^{-1}, \quad u_i \mapsto u_i.
\]
Thus left and right modules of $\affH$ are interchanged via the anti-automorphism.
\end{rem}

The following are easy to prove.

\begin{lem}\label{lem:standard}
\begin{enumerate}
\item
Suppose $i < j$.
Then for $P= z_1^{m_1} \cdots z_n^{m_n}$, we have
\begin{equation*}
	\dfrac{z_j}{z_i-z_j} (1 - K_{ij})P = \begin{cases}
	0 \text{ if } m_i = m_j, \\
	\\
	- \left(\displaystyle\sum_{r=0}^{m_j-m_i-1} z_i^r z_j^{-r}\right)P \text{ if } m_i < m_j.
	\end{cases}
\end{equation*}
\item
For $\bw = w_{b_1} \otimes \cdots \otimes w_{b_n}$, we have
\begin{equation*}
	r_{ij} \bw = \begin{cases}
		 (1/2) \bw \text{ if } b_i = b_j, \\
		 P_{ij} \bw \text{ if } b_i > b_j, \\
		 0 \text{ if } b_i < b_j.
	\end{cases}
\end{equation*}
\end{enumerate}
\end{lem}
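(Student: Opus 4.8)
The plan is to verify each part by direct computation on the monomial/tensor basis, since both reduce to elementary algebraic identities.

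For part (i), I would first record the effect of $K_{ij}$ on $P$: since $K_{ij}$ permutes $z_i$ and $z_j$, it simply interchanges the exponents $m_i$ and $m_j$, so that
\[
	(1 - K_{ij})P = \left(\prod_{k \neq i,j} z_k^{m_k}\right)\bigl(z_i^{m_i} z_j^{m_j} - z_i^{m_j} z_j^{m_i}\bigr).
\]
When $m_i = m_j$ the parenthesized factor vanishes, giving the first case. When $m_i < m_j$, writing $d = m_j - m_i > 0$ I would factor out $z_i^{m_i} z_j^{m_i}$ and apply the finite geometric series $z_j^d - z_i^d = (z_j - z_i)\sum_{r=0}^{d-1} z_i^r z_j^{d-1-r}$. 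The factor $z_j - z_i$ then cancels against the denominator of $\frac{z_j}{z_i - z_j}$, producing the overall minus sign; after multiplying through by $z_j$ and comparing with $P = \bigl(\prod_{k\neq i,j} z_k^{m_k}\bigr) z_i^{m_i} z_j^{m_i} z_j^{d}$ one reads off exactly $-\bigl(\sum_{r=0}^{d-1} z_i^r z_j^{-r}\bigr)P$.

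For part (ii), I would evaluate the two groups of terms in $r_{ij}$ separately using the action $E_{ab} w_c = \delta_{bc} w_a$ on each tensor factor. The diagonal part $\tfrac12 \sum_a (E_{aa})_i (E_{aa})_j$ sends $\bw$ to $\tfrac12 \sum_a \delta_{a,b_i}\delta_{a,b_j}\bw = \tfrac12 \delta_{b_i,b_j}\bw$, contributing only when $b_i = b_j$. For the off-diagonal part $\sum_{a<b}(E_{ab})_i (E_{ba})_j$, the operator $(E_{ab})_i (E_{ba})_j$ acts nontrivially on $\bw$ precisely when $b = b_i$ and $a = b_j$, in which case it exchanges the two relevant factors and yields $P_{ij}\bw$; the constraint $a < b$ then forces $b_j < b_i$. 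Collecting the two contributions according to whether $b_i = b_j$, $b_i > b_j$, or $b_i < b_j$ produces the three listed cases.

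Neither step presents a genuine obstacle, as the statement itself indicates. The only points requiring care are the sign bookkeeping arising from $\frac{z_j}{z_i - z_j}$ versus the factor $z_j - z_i$ in part (i), and the matching of the index constraint $a < b$ to the ordering $b_i > b_j$ in part (ii). I would also note that part (i) records only the cases $m_i = m_j$ and $m_i < m_j$, which are the ones used subsequently; the complementary case $m_i > m_j$ is not needed.
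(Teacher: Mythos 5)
Your proof is correct, and it is exactly the routine verification the paper has in mind: the paper offers no proof at all, introducing the lemma only with ``The following are easy to prove.'' Both the geometric-series computation in (i) and the case analysis of the diagonal and off-diagonal parts of $r_{ij}$ in (ii) check out, including the sign from $z_j - z_i$ versus $z_i - z_j$ and the matching of $a<b$ with $b_j < b_i$.
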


We have the following observation (See \cite[Proposition 3]{MR3202706}):
\[
	\bigwedge^n U \cong \left( \bbC[z_1^{\pm 1}, \ldots, z_n^{\pm 1}] \otimes W^{\otimes n} \right) \otimes_{\bbC \frakS_n} V^{\otimes n}
\]
as a $\bbC$-vector space.
Thus we identify them and apply Guay's result (Theorem~\ref{thm:Schur-Weyl}) to conclude that the affine Yangian $\affY$ acts on the finite wedge space $\bigwedge^n U$.
Our goal is to extend this action to the limit $n \to \infty$.
In the remainder of this section, we explain how to extend the action of the Yangian $\finY$ to the limit.

Let us introduce the degree on $\bigwedge^n U$ and $F_M$.
Take an element $\bu_{\uk} \in F_M$ ($\uk \in \cM$), where
\[
	\bu_{k_i} = z^{m_i} w_{b_i} v_{a_i}.
\]
Denote the integer $m_i$ by $m_i(\uk)$.
Put $m_i^0$ to be the integer $m_i(\uk)$ for $\uk = (M,M-1, \ldots)$.
Note that $m_i(\uk) \leq m_i^0$ holds for all $i$ and the equality holds for all but finitely many $i$.
Then we define the degree of $\bu_{\uk}$ by
\[
	\deg \bu_{\uk} = \sum_{i=1}^{\infty} (m_i^0 - m_i(\uk)).
\]
The right-hand side is a finite sum.
Let $F_M^d$ be the degree $d$ component of $F_M$.
We have
\[
	F_M = \bigoplus_{d \geq 0} F_M^d.
\]
We also define the degree on $\bigwedge^n U$ similarly:
\[
	\deg \bu_{\uk} = \sum_{i=1}^{n} (m_i^0 - m_i(\uk))
\]
for $\uk=(k_1, \ldots, k_n) \in \bbZ^n$ such that $k_1 > \cdots > k_n$.
We define a subspace $V_{M,n}$ of $\bigwedge^n U$ by
\[
	V_{M,n} = \bigoplus_{\substack{\uk=(k_1, \ldots, k_n) \in \bbZ^n\\ k_1 > \cdots > k_n\\ m_i(\uk) \leq m_i^0}} \bbC \bu_{\uk} 
\]
and let $V_{M,n}^d$ be the degree $d$ component.
Then we have
\[
	V_{M,n} = \bigoplus_{d \geq 0} V_{M,n}^d.
\]

\begin{prop}[{\cite[Proposition 3.1]{uglov}}]
Each $V_{M,n}^d$ is invariant under the action of $\finY$.
\end{prop}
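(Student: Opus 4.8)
The plan is to check invariance on a set of algebra generators of $\finY$, namely $X_i^{\pm}, H_i, J(X_i^{\pm}), J(H_i)$ for $i=1,\dots,N-1$, and to separate the argument into preservation of the degree grading and preservation of the defining inequalities $m_i(\uk)\le m_i^0$. For the grading, I would first observe that every generator preserves the total $z$-degree $\sum_{k=1}^n m_k$ of a monomial in $\bbC[z_1^{\pm1},\dots,z_n^{\pm1}]\otimes W^{\otimes n}$: under the identification $\bigwedge^n U\cong(\bbC[z_1^{\pm1},\dots,z_n^{\pm1}]\otimes W^{\otimes n})\otimes_{\bbC\frakS_n}V^{\otimes n}$ the elements $X=X_i^{\pm},H_i$ act only through $(X)_k$ on $V^{\otimes n}$ and move no power of $z$, while $J(X)$ acts through $y_k^{(n)}$, whose only non-$z$-degree-preserving constituent inside $d_k^{(n)}$ is the exchange term $\tfrac{z_k}{z_k-z_j}(1-K_{jk})P_{jk}$, which by Lemma~\ref{lem:standard}(i) sends a monomial of total $z$-degree $e$ to a combination of monomials of the same total $z$-degree. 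Since $\deg\bu_{\uk}=\sum_{i=1}^n m_i^0-\sum_{i=1}^n m_i(\uk)$ with $\sum_i m_i^0$ a constant depending only on $M$ and $n$, preservation of $\sum_i m_i(\uk)$ is exactly preservation of the degree. Thus $\finY$ respects the grading on $\bigwedge^n U$, and it remains only to prove that $V_{M,n}$ itself is $\finY$-stable; intersecting with the degree-$d$ part then yields the claim.

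Next I would recast membership in $V_{M,n}$ combinatorially. Because $k_1>\cdots>k_n$ and $m(k)$ is a non-increasing step function of $k$ that is constant on each block of $NL$ consecutive integers, both $(m_i(\uk))_i$ and $(m_i^0)_i$ are non-decreasing, and the inequalities $m_i(\uk)\le m_i^0$ for all $i$ are equivalent to $\#\{i:m_i(\uk)\ge t\}\le\#\{i:m_i^0\ge t\}$ for all $t\in\bbZ$, i.e.\ to $\#\{i:k_i\le\tau_t\}\le\#\{i:k_i^0\le\tau_t\}$ at the block-top thresholds $\tau_t$. For the generators $X_i^{\pm},H_i$ invariance is then immediate: acting by $(E_{i,i+1})_k$ or $(E_{i+1,i})_k$ changes a single $V$-index $a_k$, shifting the corresponding $k_k$ by $\mp1$ within its block, and leaves every $m_k$ and hence the entire multiset $\{m_i(\uk)\}_i$ unchanged; so these operators map $V_{M,n}$ into itself, and $H_i$ is diagonal.

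The substance of the proof is therefore the stability of $V_{M,n}$ under $J(X_i^{\pm})$ and $J(H_i)$, and this is where I expect the main obstacle. Using the Drinfeld-functor formula $J(X)(m\otimes\bv)=\sum_k m\,y_k^{(n)}\otimes(X)_k\bv$ together with the explicit shape of $d_k^{(n)}$ and Lemma~\ref{lem:standard}, I would compute the action of $J(X)$ on a basis vector $\bu_{\uk}\in V_{M,n}$. The diagonal constituents ($t\,z_k\frac{\partial}{\partial z_k}$, the scalar and $\nu_k$ terms, and the $r_{jk}$, $P_{jk}$ pieces evaluated on the already $k$-ordered configuration) reproduce scalar multiples of $\bu_{\uk}$ or of its $\fraksl_N$-moved relatives, all of which lie in $V_{M,n}$ by the previous two steps. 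The genuinely new contributions come from the exchange part $\tfrac{z_k}{z_k-z_j}(1-K_{jk})$, which by Lemma~\ref{lem:standard}(i) replaces a pair of powers $(m,m')$ with $m<m'$ by the pairs $(m+r,m'-r)$, $r=1,\dots,m'-m-1$, contracting the two powers toward each other while fixing their sum.

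The delicate point is that such a contraction can \emph{a priori} raise $\#\{i:m_i\ge t\}$ at intermediate thresholds $t$, so term-by-term membership in $V_{M,n}$ is not evident, and this is the technical heart of the argument. My plan is to show that every resulting configuration violating some inequality $m_i\le m_i^0$ is in fact suppressed, either because the shifted charge collides with another and the wedge vanishes by fermionic (Pauli) exclusion, or because such dangerous terms cancel in pairs once the full summation over $j$ and the antisymmetrization defining $\bigwedge^n U$ are taken into account. I would organize this as a triangularity statement for the Cherednik operator $d_k^{(n)}$: fix a degree, order the configurations so that the exchange terms are strictly lower and $V_{M,n}$ is a lower set, and verify that the leading diagonal term stays in $V_{M,n}$ while every strictly lower term produced remains dominated by the vacuum profile $(m_i^0)_i$. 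Establishing this triangularity-with-vanishing against the block structure of the charges is the step I expect to require the most care.
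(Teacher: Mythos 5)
First, a point of comparison: the paper offers no proof of this proposition at all --- it is imported verbatim from Uglov \cite[Proposition 3.1]{uglov} --- so your attempt can only be judged on its own terms, and on those terms it is incomplete. Your preparatory reductions are correct: $X_i^{\pm}$ and $H_i$ touch no $z$-exponents and so preserve each $V_{M,n}^d$; every constituent of $y_k^{(n)}$ preserves the total $z$-degree, hence the grading; the permutation and $r_{jk}$, $P_{jk}$ terms leave the exponent multiset untouched; so the whole question is indeed whether the exchange terms $\frac{z_q}{z_p-z_q}(1-K_{pq})P_{pq}$ of the Cherednik operators keep one inside $V_{M,n}$. But at exactly this point you stop proving and start planning: ``show that every violating configuration is suppressed, by Pauli exclusion or by cancellation'' is an announcement of the statement to be proved, not an argument, and you say yourself it is the step you expect to require the most care. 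Since everything before it is routine, the missing step is the entire content of the proposition.

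Moreover, the specific shape you propose for that step --- a triangularity statement in which $V_{M,n}$ is a lower set for an order making the exchange terms strictly lower, so that ``every strictly lower term produced remains dominated by the vacuum profile'' --- is provably false, and it also contradicts your own (correct) earlier observation that violating terms genuinely occur. The exchange terms lower the exponent multiset in the majorization order (they contract a pair $(m_p,m_q)$, $m_p<m_q$, to $(m_p+r,m_q-r)$ with the sum fixed), but the condition $m_i(\uk)\le m_i^0$ is not an order ideal for majorization. Concretely, take $M=0$ and $n=3NL$, so the vacuum profile is $(0^{NL},1^{NL},2^{NL})$, and apply the exchange with $r=1$ to two factors of the vacuum vector with exponents $0$ and $2$: the resulting exponent multiset $(0^{NL-1},1^{NL+2},2^{NL-1})$ has $2NL+1$ entries $\ge 1$ against the vacuum's $2NL$, so dominance fails. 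Such terms really do leave $V_{M,n}$ as exponent configurations; what saves the statement is that the corresponding wedges vanish --- in this example both contracted charges land in the exponent-$1$ block, all $NL$ of whose slots are still occupied by untouched factors, so the wedge has a repeated factor --- and in general one must establish this vanishing (or cancellation) for every configuration with a maximal count at some threshold, for all $L\ge 1$, including the extra $W$-permutation twist coming from $P_{pq}$. That verification is precisely Uglov's Proposition 3.1; your proposal locates it correctly but does not supply it.
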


\begin{rem}
The action of $\affsl$ does not preserve the degree.
Hence $V_{M,n}^d$ is not invariant under the action of $\affY$.
\end{rem}

Let $s \in \{ 0,1, \ldots, NL-1 \}$ be the number defined by
\[
	s \equiv M \mod NL.
\]

\begin{prop}[{\cite[Proposition 3.3 and 3.8]{uglov}}]\label{prop:intertwine}
\begin{enumerate}
\item
If $l \geq d$ then
\[
	V_{M,s+lNL}^d \to F_M^d, \quad v \mapsto v \wedge \ket{M-(s+lNL)}
\]
is an isomorphism of $\bbC$-vector spaces.
In particular, if $l' \geq l \geq d$ then
\[
	V_{M,s+lNL}^d \to V_{M,s+l'NL}^d, \quad v \mapsto v \wedge \bu_{M-(s+lNL)} \wedge \cdots \wedge \bu_{M-(s+l'NL)+1} 
\]
is an isomorphism of $\bbC$-vector spaces.
\item
If $l' \geq l \geq d$ then 
\[
	V_{M,s+lNL}^d \to V_{M,s+l'NL}^d
\]
is an isomorphism of $\finY$-modules.
\end{enumerate}
\end{prop}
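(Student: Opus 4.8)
\emph{Strategy.} I would prove (i) by matching the two distinguished bases and (ii) by checking the intertwining property on the generators of $\finY$, using the explicit action supplied by Theorem~\ref{thm:Schur-Weyl} and Drinfeld's functor. Recall that $F_M^d$ has a basis $\{\bu_{\uk}\}$ indexed by $\uk \in \cM$ with $\deg \bu_{\uk} = d$, equivalently by partitions $\lambda$ of degree $d$, while $V_{M,n}^d$ (writing $n = s + lNL$) has a basis indexed by strictly decreasing $n$-tuples $(k_1, \ldots, k_n)$ with $m_i(\uk) \leq m_i^0$ and $\sum_{i=1}^n (m_i^0 - m_i(\uk)) = d$. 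The map sends $\bu_{(k_1,\ldots,k_n)}$ to $\bu_{k_1} \wedge \cdots \wedge \bu_{k_n} \wedge \ket{M-n}$. Since the appended tail $\ket{M-n}$ agrees with the vacuum $\ket{M}$ in all slots beyond the first $n$, those slots contribute $m_i^0 - m_i = 0$; hence, provided the wedge is nonzero, the image lies in $F_M^d$ and the degree is preserved. Nonvanishing amounts to $k_n > M - n$, i.e. $k_n \geq k_n^0 = M - n + 1$. Granting this, the map carries distinct basis vectors to distinct basis vectors and is therefore injective.

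Surjectivity, together with the inequality $k_n \geq k_n^0$, rests on the following key estimate: \emph{if $\bu_{\uk} \in F_M^d$ corresponds to $\lambda$, then $l(\lambda) \leq s + dNL$.} Indeed $m_i(\uk)$ is weakly increasing in $i$ and bounded above by $m_i^0$, while the vacuum value $m_i^0$ increases by one each time $i$ grows by $NL$; because $\lambda_1 \geq \lambda_2 \geq \cdots$, any nonzero part in a deep row is forced to come with enough downward shifts of the $m$-values that each complete block of $NL$ rows beyond the initial $s$ contributes at least one unit of degree. Combined with $l \geq d$ this gives $l(\lambda) \leq s + dNL \leq s + lNL = n$, so $\bu_{\uk}$ has vacuum tail beyond slot $n$ and is the image of its truncation $(k_1,\ldots,k_n) \in V_{M,n}^d$ (the localization also yields $k_n = k_n^0$, settling nonvanishing). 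The main obstacle in (i) is to make this degree-versus-length estimate precise, in particular to control the top block correctly — this is exactly where the residue $s \equiv M \bmod NL$ enters. Once both maps into $F_M^d$ are isomorphisms, the ``in particular'' statement follows because the triangle they form with the composite $v \mapsto v \wedge \bu_{M-(s+lNL)} \wedge \cdots \wedge \bu_{M-(s+l'NL)+1}$ commutes.

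For (ii) the stabilization map is $v \mapsto v \wedge B$, where $B = \bu_{M-n} \wedge \cdots \wedge \bu_{M-n'+1}$. Because $s \equiv M \bmod NL$, the value $M-n$ sits at the top of an $m$-block, so $B$ is a wedge of $(l'-l)$ complete blocks, each being the full exterior power $\bigwedge^{NL}(z^m \otimes W \otimes V)$ for one fixed power of $z$: a one-dimensional space on which $\fraksl_N$ acts by the trace, hence trivially. Therefore the degree-zero generators $X_i^{\pm}, H_i$, acting as the derivation $\sum_k (X)_k$, satisfy $X(v \wedge B) = (Xv) \wedge B$, so the map intertwines the $\fraksl_N$-action. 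The remaining point is the level-one generators $J(X_i^{\pm}), J(H_i)$, whose action $\sum_k m\, y_k^{(n)} \otimes (X)_k$ through Theorem~\ref{thm:Schur-Weyl} depends on the number of tensor factors via the Dunkl--Cherednik operators $y_k^{(n)}$. Here I would show that, on vectors $v \wedge B$, the contribution of the appended factors vanishes and the long-range terms of $y_k^{(n')}$ coupling the first $n$ factors to $B$ cancel, reducing the action on the $v$-part to that of $y_k^{(n)}$; this uses that $B$ is uniform over the indices $a \in \{1,\ldots,N\}$ and $b \in \{1,\ldots,L\}$, together with Lemma~\ref{lem:standard}. Combined with the $\finY$-invariance of each $V_{M,n}^d$ recorded above and the vector-space isomorphism of (i), this shows the map is an isomorphism of $\finY$-modules. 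I expect this cancellation of the nonlocal Dunkl terms across the appended complete blocks to be the main obstacle in (ii).
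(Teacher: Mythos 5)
First, a point of reference: the paper itself contains \emph{no} proof of Proposition~\ref{prop:intertwine} --- it is imported wholesale from Uglov \cite[Propositions 3.3 and 3.8]{uglov} --- so your proposal can only be judged on its own terms (it does, in outline, parallel Uglov's original argument). Part (i) of your proposal is essentially sound. Your key estimate (degree $d$ forces $l(\lambda) \leq s + dNL$) is true, and your mechanism is the right one: writing $k = a - Nb - NLm$ gives $m = \lfloor -k/(NL) \rfloor$, so row $i$ of $\lambda$ contributes to the degree precisely when $\lambda_i$ exceeds the residue $r_i$ of $i-1-M$ modulo $NL$ taken in $\{0,\dots,NL-1\}$; the rows $i \equiv s+1 \pmod{NL}$ have $r_i = 0$, so if $l(\lambda) > s+dNL$ the $d+1$ rows $i = s+1, s+1+NL, \dots, s+1+dNL$ each contribute at least one, forcing degree at least $d+1$. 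The same normalization also settles the point you merely ``grant'': since $n = s+lNL \equiv M \pmod{NL}$, the vacuum entry $k_n^0 = M-n+1$ is congruent to $1$ mod $NL$, hence sits at the bottom of its $m$-block, so the defining condition $m_n(\uk) \leq m_n^0$ of $V_{M,n}$ is \emph{equivalent} to $k_n \geq M-n+1$; this is exactly the ordering/nonvanishing needed for $v \wedge \ket{M-n}$ to be a correctly ordered basis vector.

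The genuine gap is in part (ii). The trace argument for the degree-zero generators is fine, but for $J(X_i^{\pm}), J(H_i)$ your entire proof is the sentence ``Here I would show that \dots the contribution of the appended factors vanishes and the long-range terms \dots cancel,'' and that assertion \emph{is} statement (ii); nothing in the proposal establishes it. Two features make it genuinely hard. First, the intertwining must hold exactly in $\bigwedge^{n'}U$ (with $n' = s+l'NL$), because the claim is an isomorphism of $\finY$-modules $V_{M,n}^d \to V_{M,n'}^d$ with no semi-infinite tail appended; so none of the ``vanishes after wedging with $\ket{-mNL}$'' arguments in the spirit of Lemma~\ref{lem:vanish} are available to you. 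Second, the extra terms do not vanish individually: for $k > n$, the operator $(X)_k$ applied to a full block produces a factor that duplicates one already present in the block, but the insertion of $y_k^{(n')}$ perturbs the polynomial part, so the resulting wedge is no longer obviously zero; one must organize all cross terms coming from $d_k^{(n')} - d_k^{(n)}$ and from $s_{kj}$ with $k \leq n < j$, together with all terms at positions $k > n$, and exhibit their exact pairwise cancellation using Lemma~\ref{lem:standard} and a degree bound in the style of Lemma~\ref{lem:degree}. The paper's Section 4.2 (Lemmas~\ref{lem:Jvanish} and~\ref{lem:vanishing_terms}) carries out precisely this kind of bookkeeping in the closely analogous tail-appended, $T$-twisted setting, and it occupies several pages; the untwisted exact version you need is Uglov's Proposition 3.8 and is of comparable length. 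Until that computation is done, your treatment of (ii) restates the problem rather than solving it.
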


\begin{rem}
Uglov \cite{uglov} considered actions of the bigger algebra $\finY(\gl_N)$.
To make the map in Proposition~\ref{prop:intertwine} intertwine the $\finY(\gl_N)$-actions, one needs to correct the $\finY(\gl_N)$-actions by some twist, which does not change the $\finY$-module structure.
\end{rem}

By Proposition~\ref{prop:intertwine}, an action of $\finY$ on $F_M^d$ is well-defined through the actions on $V_{M,s+lNL}^d$ which are compatible for all $l \geq d$; for $X \in \finY$ and $v \in V_{M,s+lNL}^d$, the action is given by
\begin{equation}
	X(v \wedge \ket{M-(s+lNL)}) = Xv \wedge \ket{M-(s+lNL)}. \label{eq:Yangian_action}
\end{equation}
This action preserves each component $F(\uc)$ since it changes only finitely many factors for each element.
 
\section{Affine Yangian actions on higher level Fock spaces}\label{sec:action}

\subsection{Main Theorem}

To construct an action of the affine Yangian $\affY$ on $F(\uc)$, we define a map $T_{\infty}$ satisfying the condition in Proposition~\ref{prop:sufficient_condition}.
Such $T_{\infty}$ is given by Takemura-Uglov \cite{MR1710750} in the quantum toroidal case and we follow it.

Consider the map $T$ given by (\ref{eq:T}) for our $\affY$-module $\left( \bbC[z_1^{\pm 1}, \ldots, z_n^{\pm 1}] \otimes W^{\otimes n} \right) \otimes_{\bbC \frakS_n} V^{\otimes n}$.
It is given by
\[
	T(z^{m_1}w_{b_1} v_{a_1} \wedge \cdots \wedge z^{m_n} w_{b_n} v_{a_n}) =  z^{m_1-\delta_{a_1,N}} w_{b_1} v_{a_1+1} \wedge \cdots \wedge z^{m_n-\delta_{a_n,N}} w_{b_n} v_{a_n+1}.
\]
We can rewrite this as follows:
\[
	T(\bu_{k_1} \wedge \cdots \wedge \bu_{k_n}) = \bu_{k_1'} \wedge \cdots \wedge \bu_{k_n'},
\]
where
\[
	k_i' = \begin{cases}
	k_i + 1 \text{ if } k_i \not\equiv 0 \mod NL, \\
	k_i + 1 + N(L-1) \text{ if } k_i \equiv 0 \mod NL.
	\end{cases}
\]
Fix $l \geq d$.
Since $s$ is defined to satisfy $M \equiv s \mod NL$, there exists a unique $m \in \bbZ$ satisfying
\[
	M - (s + lNL) = -mNL.
\]
Then by the definition every element of $F_M^d$ is written as $v \wedge \ket{-mNL}$ for some $v \in V_{M,s+lNL}^d$.
Following \cite[Lemma 6.9]{MR1710750}, we introduce elements $v_{L,N}$ and $v_{L,N-1}$ by
\begin{align*}
	v_{L,N} &= z^m w_1 v_N \wedge z^m w_2 v_N \wedge \cdots \wedge z^m w_L v_N \\
	&= \bu_{-mNL} \wedge \bu_{-mNL-N} \wedge \cdots \wedge \bu_{-mNL-(L-1)N}\end{align*}
and
\begin{align*}
	& v_{L,N-1} = z^m w_1 v_N \wedge z^m w_1 v_{N-1} \wedge z^m w_2 v_N \wedge z^m w_2 v_{N-1} \wedge \cdots \wedge z^m w_L v_N \wedge z^m w_L v_{N-1} \\
	&= \bu_{-mNL} \wedge \bu_{-mNL-1} \wedge \bu_{-mNL-N} \wedge \bu_{-mNL-N-1} \wedge \cdots \wedge \bu_{-mNL-(L-1)N} \wedge \bu_{-mNL-(L-1)N-1}.
\end{align*}
Note that
\begin{align*}
	T(v_{L,N}) &= z^{m-1} w_1 v_1 \wedge z^{m-1} w_2 v_1 \wedge \cdots \wedge z^{m-1} w_L v_1 \\
	&= \bu_{-mNL+(L-1)N+1} \wedge \bu_{-mNL+(L-2)N+1} \wedge \cdots \wedge \bu_{-mNL+1}
\end{align*}
and
\begin{align*}
	& T(v_{L,N-1}) = z^{m-1} w_1 v_1 \wedge z^m w_1 v_N \wedge z^{m-1} w_2 v_1 \wedge z^m w_2 v_N \wedge \cdots \wedge z^{m-1} w_L v_1 \wedge z^m w_L v_N \\
	&= \bu_{-mNL+(L-1)N+1} \wedge \bu_{-mNL} \wedge \bu_{-mNL+(L-2)N+1} \wedge \bu_{-mNL-N} \wedge \cdots \wedge \bu_{-mNL+1} \wedge \bu_{-mNL-(L-1)N} \\
	&= (-1)^{1+2+\cdots+(L-1)} T(v_{L,N}) \wedge v_{L,N}.
\end{align*}
Define a bijective $\bbC$-linear map $T_{\infty} \colon F_M \to F_{M+L}$ by
\begin{equation}
	T_{\infty}(v \wedge \ket{-mNL}) = T(v \wedge v_{L,N}) \wedge \ket{-mNL}. \label{eq:T_infty}
\end{equation}
In the case $L=1$, this is simply written as
\[
	T_{\infty}(\bu_{k_1} \wedge \bu_{k_2} \wedge \cdots) = \bu_{k_1 + 1} \wedge \bu_{k_2 + 1} \wedge \cdots.
\]

\begin{lem}[cf.\ {\cite[Lemma 6.9]{MR1710750}}]\label{lem:key}
We have
\[
	TX(v \wedge v_{L,N}) \wedge \ket{-mNL} = T(Xv \wedge v_{L,N}) \wedge \ket{-mNL}
\]
for $X= X_{i,r}^{\pm}, H_{i,r}$ $(i=1, \ldots, N-2)$ and
\[
	T^2X(v \wedge v_{L,N-1}) \wedge \ket{-mNL} = T^2(Xv \wedge v_{L,N-1}) \wedge \ket{-mNL}
\]
for $X= X_{N-1,r}^{\pm}, H_{N-1,r}$.
\end{lem}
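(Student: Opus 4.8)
The plan is to work entirely through the Schur--Weyl (Drinfeld) description of the $\finY$-action recalled above, exploiting the rigid shape of the tails $v_{L,N}$ and $v_{L,N-1}$. First I would record why the two cases arise and how they feed into Proposition~\ref{prop:sufficient_condition}: under the rotation $\rho$ the generators with $i=2,\dots,N-1$ are sent to combinations of $X_{i-1,s}^{\pm},H_{i-1,s}$ with $i-1\le N-2$, while $\rho^2(X_{1,r}^{\pm})$ involves $X_{N-1,s}^{\pm}$; this is exactly the split $i\le N-2$ (one $T$, tail $v_{L,N}$) versus $i=N-1$ (two $T$'s, tail $v_{L,N-1}$). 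The geometric reason for $T^2$ in the second case is that $T$ sends $v_a\mapsto v_{a+1}$ cyclically (with $v_N\mapsto v_1$), so only $T^2$ carries the block $\{v_{N-1},v_N\}$ down to the bottom block $\{v_1,v_2\}$, i.e.\ into the vacuum region.

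The degree-zero (Chevalley) part is immediate and in fact holds before applying $T$. On the wedge space $\bigwedge^{n}U$ the operators $X_i^{\pm},H_i$ act as derivations on the $V$-factors, and for $i\le N-2$ each of $E_{i,i+1},E_{i+1,i},E_{ii}-E_{i+1,i+1}$ annihilates $v_N$, whence $X(v\wedge v_{L,N})=(Xv)\wedge v_{L,N}$. For $i=N-1$ the same operators act inside $\langle v_{N-1},v_N\rangle$, and one checks on each block $z^m w_b v_N\wedge z^m w_b v_{N-1}$ that $X_{N-1}^{\pm}$ vanish by the Pauli rule in the wedge and $H_{N-1}$ vanishes because the two weights cancel; hence $X(v\wedge v_{L,N-1})=(Xv)\wedge v_{L,N-1}$. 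So both identities are trivial when $r=0$.

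The substance is the first-order generators $J(X_i^{\pm}),J(H_i)$, out of which all $X_{i,r}^{\pm},H_{i,r}$ (fixed $i$) are produced via \eqref{eq:Y2} and \eqref{eq:Y4}. Using $J(X)(m\otimes\bv)=\sum_k m\,y_k^{(n)}\otimes(X)_k\bv$ together with the fact that $(X)_k$ kills every tail factor, only the indices $k$ coming from $v$ survive, so the difference between the two sides of the lemma equals $\sum_k m\,(y_k^{(n)}-y_k^{(n_0)})\otimes(X)_k\bv$, where $n_0$ is the number of wedge factors of $v$ and $n=n_0+L$. I would then compute $y_k^{(n)}-y_k^{(n_0)}$ explicitly: it is a scalar coming from the shift $n_0\mapsto n_0+L$ in the $\tfrac{n}{2L}$-term of $d_k^{(n)}$, plus the coupling terms of $d_k^{(n)}$ and of $\tfrac{c}{2}\sum_j s_{kj}$ that link a $v$-factor $k$ to the tail factors $j$. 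Because the tail has constant $z$-exponent $m$ and constant $V$-part $v_N$ (resp.\ the constant block $\{v_{N-1},v_N\}$), Lemma~\ref{lem:standard} evaluates both the rational $\tfrac{z}{z-z}(1-K)P$ couplings and the $r_{kj}$ couplings in closed form. The claim is that, after applying $T$ and wedging with $\ket{-mNL}$, the scalar contribution is cancelled exactly by the $K_{kj}P_{kj}$ and $r_{kj}$ coupling terms, while the surviving terms die by antisymmetry: the shift $T$ (resp.\ $T^2$) moves the affected factors into indices $\le -mNL$ already occupied by $\ket{-mNL}$.

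To reach all $r$ uniformly I would phrase this through the generating series of the Drinfeld functor: the tail factors, all having $V$-part $v_N$, contribute to the $\gl_N$-Yangian matrix $T(u)$ only through its $N$-th row and column, which $T$ rotates into the first row and column and which is annihilated against the vacuum $\ket{-mNL}$; this packages the scalar and the Lemma~\ref{lem:standard} contributions at every order at once, and the $i=N-1$ case is identical with $v_{L,N-1}$ and $T^2$. I expect the main obstacle to be precisely the cancellation at first order, namely showing that the scalar shift together with the $K_{kj}P_{kj}$ and $r_{kj}$ couplings combine into an element of the kernel of $v\mapsto T(v)\wedge\ket{-mNL}$. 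This is the only step where the rigid shape of the tail and the index shift of $T$ must be used together; once it is settled, the generating-series formulation (or, alternatively, propagation from the $r\le 1$ generators through \eqref{eq:Y2} and \eqref{eq:Y4}, using that the degree-zero identity already holds before $T$) yields the statement for all $X_{i,r}^{\pm},H_{i,r}$.
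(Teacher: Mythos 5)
Your outline is sound for the first identity (tail $v_{L,N}$, $i\le N-2$): there your key reduction --- the tail $V$-factors are all $v_N$, hence killed by $(X_i^{\pm})_k,(H_i)_k$, so the difference of the two sides collapses to $\sum_{k=1}^n\bigwedge(P\otimes\bw)\,(y_k^{(n+L)}-y_k^{(n)})\otimes(X_i^-)_k\bv$ --- is exactly what the paper does, and your cancellation/vanishing mechanisms match the paper's equation (\ref{eq:J}) and Lemma~\ref{lem:Jvanish}. But the proposal has a genuine gap at $i=N-1$, which is where most of the paper's work lies. For $X=X_{N-1,r}^{\pm},H_{N-1,r}$ the relevant tail is $v_{L,N-1}$, whose $V$-part is $(v_N\otimes v_{N-1})^{\otimes L}$, and $X_{N-1}^-=E_{N,N-1}$, $X_{N-1}^+=E_{N-1,N}$, $H_{N-1}$ all act \emph{nontrivially} on these factors. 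So your premise ``$(X)_k$ kills every tail factor'' is false precisely in this case, and $J(X_{N-1}^-)(v\wedge v_{L,N-1})$ contains the extra terms $\sum_{j=1}^L\bigwedge(P\otimes\bw)\,y_{n+2j}^{(n+2L)}\otimes(X_{N-1}^-)_{n+2j}\bv$, in which the Dunkl--Cherednik operator sits at a tail position; these are the terms displayed in (\ref{eq:difference}). They are not all killed by antisymmetry or by $T^2$ pushing factors into $\ket{-mNL}$: after discarding the ones that do vanish (Lemma~\ref{lem:vanishing_terms}), the tail action leaves the terms $\bigwedge(P\otimes\bw)s_{k,n+2j}\otimes(X_{N-1}^-)_{n+2j}\bv$ with $a_k=N-1$, and the proof closes only because each of these \emph{equals} the surviving coupling term $\bigwedge(P\otimes\bw)s_{k,n+2j-1}\otimes(X_{N-1}^-)_k\bv$ coming from the $y$-difference on the $v$-side, the two entering with opposite signs (the last lemma of the paper). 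Your sentence ``the $i=N-1$ case is identical with $v_{L,N-1}$ and $T^2$'' is therefore exactly the missing step: it is not identical, and the generating-series heuristic (tail contributes only to the $N$-th row and column of the Yangian matrix) cannot apply, since the $v_{L,N-1}$ tail occupies rows $N-1$ and $N$.

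A second, smaller omission, already present in the $i\le N-2$ case: your two vanishing mechanisms (wedge antisymmetry, and $T$ or $T^2$ moving factors to indices $\le -mNL$ occupied by the vacuum) do not account for all terms. By Lemma~\ref{lem:standard}, the coupling $\frac{z_{n+j}}{z_k-z_{n+j}}(1-K_{k,n+j})$ produces, besides $-P$, the correction $P'=\sum_{r=1}^{m-m_k-1}z_k^rz_{n+j}^{-r}P$, and the resulting wedges $\bigwedge P'\otimes P_{k,n+j}\bw\otimes(X_i^-)_k\bv$ vanish for neither of your two reasons: the exponents are strictly between the relevant values, so no two factors coincide and $T$ does not push them below $-mNL$. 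The paper kills them by a degree/filtration argument (Lemma~\ref{lem:degree}): a nonzero such term would have degree $>l$, contradicting that everything in sight has degree $d\le l$. This is the one place where the standing choice $l\ge d$ is essential, and some version of it must appear in any completed form of your argument.
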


We give a proof of Lemma~\ref{lem:key} in the next subsection.

\begin{prop}\label{prop:cyclic}
We have
\begin{equation*}
	T_{\infty}^{-1} X T_{\infty} (v \wedge \ket{-mNL}) = \rho(X) (v \wedge \ket{-mNL})
\end{equation*}
for $X= X_{i,r}^{\pm}, H_{i,r}$ $(i=2, \ldots, N-1)$ and
\begin{equation*}
	T_{\infty}^{-2} X T_{\infty}^2 (v \wedge \ket{-mNL}) = \rho^2(X) (v \wedge \ket{-mNL})
\end{equation*}
for $X= X_{1,r}^{\pm}, H_{1,r}$.
\end{prop}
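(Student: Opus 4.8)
The plan is to deduce both conjugation identities from the finite-dimensional versions already available. Guay's Lemma~\ref{lem:Guay}, read through Proposition~\ref{prop:sufficient_condition}, gives on the finite wedge space $\bigwedge^{k} U$ the operator identities $\varphi(X)\,T = T\,\varphi(\rho(X))$ for $X = X_{i,r}^{\pm}, H_{i,r}$ with $i = 2, \ldots, N-1$, and $\varphi(X)\,T^2 = T^2\,\varphi(\rho^2(X))$ for $X = X_{1,r}^{\pm}, H_{1,r}$. The task is then to transport these through the definition~\eqref{eq:T_infty} of $T_\infty$ and to discard the auxiliary factors $v_{L,N}$, $v_{L,N-1}$ using Lemma~\ref{lem:key}. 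Fix the degree $d$, write $n = s + lNL$, and take $l$ large enough (permissible by Proposition~\ref{prop:intertwine}) that $v$ and all the vectors produced below lie in the appropriate degree components; recall that $\finY$ acts on each $F_{M'}$ only through the finite part, by~\eqref{eq:Yangian_action}.

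Consider first $X = X_{i,r}^{\pm}, H_{i,r}$ with $i = 2, \ldots, N-1$. By~\eqref{eq:T_infty} and~\eqref{eq:Yangian_action} on $F_{M+L}$,
\[
	X\,T_\infty(v \wedge \ket{-mNL}) = \bigl(X \cdot T(v \wedge v_{L,N})\bigr) \wedge \ket{-mNL},
\]
where $X$ on the right-hand side is the $\finY$-action on $\bigwedge^{n+L} U$. Applying Guay's relation $\varphi(X)\,T = T\,\varphi(\rho(X))$ there rewrites this as $T\bigl(\rho(X) \cdot (v \wedge v_{L,N})\bigr) \wedge \ket{-mNL}$. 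Since $i \neq 0,1$, the rotation formula gives $\rho(X_{i,r}^{\pm}) = \sum_s \binom{r}{s}(\hbar/2)^{r-s} X_{i-1,s}^{\pm}$ (and likewise for $H$), a $\bbC$-linear combination of generators of index $i - 1 \in \{1, \ldots, N-2\}$; hence the first identity of Lemma~\ref{lem:key} applies term by term and yields
\[
	T\bigl(\rho(X) \cdot (v \wedge v_{L,N})\bigr) \wedge \ket{-mNL} = T\bigl(\rho(X) v \wedge v_{L,N}\bigr) \wedge \ket{-mNL} = T_\infty\bigl(\rho(X)(v \wedge \ket{-mNL})\bigr).
\]
Composing with $T_\infty^{-1}$ gives the first assertion.

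For $X = X_{1,r}^{\pm}, H_{1,r}$ the same scheme works with $T_\infty^2$ in place of $T_\infty$. The key preliminary is the identity
\[
	T_\infty^2(v \wedge \ket{-mNL}) = (-1)^{1+2+\cdots+(L-1)}\, T^2(v \wedge v_{L,N-1}) \wedge \ket{-mNL},
\]
obtained by applying~\eqref{eq:T_infty} twice (the second application again uses the same $m$ and $v_{L,N}$, since the tail is still $\ket{-mNL}$), using that $T$ acts factorwise on wedges, and invoking the relation $T(v_{L,N-1}) = (-1)^{1+\cdots+(L-1)} T(v_{L,N}) \wedge v_{L,N}$ recorded before~\eqref{eq:T_infty}. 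Iterating the rotation formula twice through the special case $i = 0,1$ shows that $\rho^2(X_{1,r}^{\pm})$ is a combination of the generators $X_{N-1,t}^{\pm}$ (and $\rho^2(H_{1,r})$ of $H_{N-1,t}$). Thus Guay's relation $\varphi(X)\,T^2 = T^2\,\varphi(\rho^2(X))$ on $\bigwedge^{n+2L} U$, followed by the second identity of Lemma~\ref{lem:key} (which is stated exactly for index $N-1$), runs through as before; the two sign factors cancel and one obtains $T_\infty^{-2} X T_\infty^2 (v \wedge \ket{-mNL}) = \rho^2(X)(v \wedge \ket{-mNL})$.

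The step I expect to demand the most care is not the algebra above but the passage to the limit underlying it: one must verify that $T(v \wedge v_{L,N})$ and $T^2(v \wedge v_{L,N-1})$ are genuinely of the form $(\text{finite part}) \wedge \ket{-mNL}$ inside $F_{M+L}$ and $F_{M+2L}$, with the finite part lying in some $V^{d'}_{M+L,\, n+L}$ (resp.\ $V^{d''}_{M+2L,\, n+2L}$), so that~\eqref{eq:Yangian_action} is legitimately invoked there, and that a single $l$ can be chosen to absorb the finitely many bounded degree shifts caused by $T$ and $T^2$. This rests on the factorwise, index-shifting description of $T$ together with Proposition~\ref{prop:intertwine}; once it is in place, the chains of equalities above are formal.
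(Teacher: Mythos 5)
Your proof is correct and takes essentially the same route as the paper's: both reduce the claim to Guay's finite-level conjugation identities (Lemma~\ref{lem:Guay} via Proposition~\ref{prop:sufficient_condition}) and then use Lemma~\ref{lem:key} to strip off the auxiliary vectors $v_{L,N}$ and $v_{L,N-1}$, with the same sign bookkeeping coming from $T(v_{L,N-1}) = (-1)^{1+\cdots+(L-1)}T(v_{L,N}) \wedge v_{L,N}$ in the $T_{\infty}^2$ case. Your explicit remarks on the index shifts under $\rho$ (so that Lemma~\ref{lem:key} applies term by term to $\rho(X)$ and $\rho^2(X)$) and on the degree/limit bookkeeping via Proposition~\ref{prop:intertwine} only spell out what the paper leaves implicit.
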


\begin{proof}
We have
\begin{equation*}
	T_{\infty} (v \wedge \ket{-mNL}) = T(v \wedge v_{L,N}) \wedge \ket{-mNL}
\end{equation*}
by the definition (\ref{eq:T_infty}) of $T_{\infty}$.
The action of $X \in \finY$ on the right-hand side is given by
\begin{equation*}
	X(T(v \wedge v_{L,N}) \wedge \ket{-mNL}) = XT(v \wedge v_{L,N}) \wedge \ket{-mNL}
\end{equation*}
by its definition due to (\ref{eq:Yangian_action}).
We have
\begin{equation*}
	XT(v \wedge v_{L,N})= T \rho(X)(v \wedge v_{L,N})
\end{equation*}
by Lemma~\ref{lem:Guay}.
We use Lemma~\ref{lem:key} for $X= X_{i,r}^{\pm}, H_{i,r}$ $(i=2, \ldots, N-1)$ to obtain
\begin{equation*}
	T \rho(X)(v \wedge v_{L,N}) \wedge \ket{-mNL} = T (\rho(X)v \wedge v_{L,N}) \wedge \ket{-mNL}.
\end{equation*}
Hence we have
\begin{equation*}
	\begin{split}
		X T_{\infty} (v \wedge \ket{-mNL}) &= T (\rho(X)v \wedge v_{L,N} ) \wedge \ket{-mNL} \\
		&= T_{\infty} (\rho(X)v \wedge \ket{-mNL}) \\
		&= T_{\infty} \rho(X) (v \wedge \ket{-mNL}).
	\end{split}
\end{equation*}
By a similar argument, we have for $X= X_{N-1,r}^{\pm}, H_{N-1,r}$ that
\begin{equation*}
	\begin{split}
		X T_{\infty}^2 (v \wedge \ket{-mNL}) &= X T_{\infty} (T(v \wedge v_{L,N}) \wedge \ket{-mNL}) \\
		&= X (T^2(v \wedge v_{L,N}) \wedge T(v_{L,N}) \wedge \ket{-mNL}) \\
		&= (-1)^{L(L-1)/2} X (T^2(v \wedge v_{L,N-1}) \wedge \ket{-mNL}) \\
		&= (-1)^{L(L-1)/2} X T^2(v \wedge v_{L,N-1}) \wedge \ket{-mNL} \\
		&= (-1)^{L(L-1)/2} T^2 \rho^2(X) (v \wedge v_{L,N-1}) \wedge \ket{-mNL} \\
		&= (-1)^{L(L-1)/2} T^2 (\rho^2(X)v \wedge v_{L,N-1}) \wedge \ket{-mNL} \\
		&= T_{\infty}^2 (\rho^2(X)v \wedge \ket{-mNL}) \\
		&= T_{\infty}^2 \rho^2(X) (v \wedge \ket{-mNL}).
	\end{split}
\end{equation*}
\end{proof}

By Proposition~\ref{prop:sufficient_condition} and \ref{prop:cyclic}, we conclude that the affine Yangian $\affY$ acts on $F_M$.
The action preserves each component $F(\uc)$ since those of $\finY$ and $\affsl$ preserve it.
We obtain the main result of this paper.

\begin{thm}\label{thm:main}
The actions of the affine Lie algebra $\affsl$ and the Yangian $\finY$ on the level $L$ Fock space $F(\uc)$ is glued and extended to an action of the affine Yangian $\affY$.
\end{thm}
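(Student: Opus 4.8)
The plan is to deduce the theorem from the abstract extension criterion of Proposition~\ref{prop:sufficient_condition}, taking the $\finY$-module $F_M$ of (\ref{eq:Yangian_action}) and the bijection $T_\infty$ of (\ref{eq:T_infty}) in place of $\varphi$ and $T$. Since $\finY$ already acts on $F_M$ and the criterion then manufactures the index-$0$ generators $X_{0,r}^\pm, H_{0,r}$ by conjugation with $T_\infty$, all that is needed is to check the two intertwining hypotheses of that proposition; the defining relations (\ref{eq:Y1})--(\ref{eq:Y12}) for the resulting operators are then automatic, being built into Guay's statement.

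First I would settle the bookkeeping forced by the semi-infinite setting. The map $T_\infty$ does not fix $F_M$ but sends it to $F_{M+L}$, on which $\finY$ again acts compatibly; so $T_\infty^{-1} X T_\infty$ and $T_\infty^{-2} X T_\infty^2$ are legitimate endomorphisms of $F_M$, and the hypotheses of Proposition~\ref{prop:sufficient_condition} read
\[
	T_\infty^{-1} X T_\infty = \rho(X) \ (i = 2,\dots,N-1), \qquad T_\infty^{-2} X T_\infty^2 = \rho^2(X) \ (i=1).
\]
This is precisely Proposition~\ref{prop:cyclic}, and it is where the real work lies.

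To prove these identities I would peel off the stable tail: every vector of $F_M^d$ has the form $v \wedge \ket{-mNL}$ with $v \in V_{M,s+lNL}^d$ finite, and (\ref{eq:T_infty}) lets me pull $T_\infty$ inside the wedge as the finite operator $T$ acting on $v \wedge v_{L,N}$. On the finite wedge $T$ already conjugates the $\finY$-action into $\rho$ by Guay's Lemma~\ref{lem:Guay}, so the only remaining point is to commute $T$ (for $i \le N-2$), respectively $T^2$ (for $i = N-1$), past the inserted vacuum block $v_{L,N}$, respectively $v_{L,N-1}$. That commutation is exactly Lemma~\ref{lem:key}, which I expect to be the main obstacle: it requires tracking how the Yangian generators act on the boundary factors of the finite wedge and verifying that the inserted block is left undisturbed, and it is what forces the split into the ranges $i \le N-2$ and $i = N-1$ and the use of $T^2$ together with $v_{L,N-1}$ in the second case (the Dynkin rotation $\rho$ sends $i$ to $i-1$, so $i = 1$ must be rotated twice to return to the range $\{1,\dots,N-1\}$ where $\finY$ lives).

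Granting Proposition~\ref{prop:cyclic}, Proposition~\ref{prop:sufficient_condition} produces an $\affY$-action on $F_M$. It remains to see that this action restricts to each summand $F(\uc)$. Here I would use that $\finY$ and $\affsl$ both preserve $F(\uc)$ and that together they generate all of $\affY$: for instance $X_{0,1}^+$ is recovered from $X_{0,0}^+ \in \affsl$ and $H_{1,0}, H_{1,1} \in \finY$ through relation (\ref{eq:Y6}) with $(i,j)=(1,0)$, and the remaining index-$0$ generators follow similarly, so preservation of $F(\uc)$ by the two subalgebras propagates to the whole algebra. Alternatively, $T_\infty$ restricts to a bijection $F(\uc) \to F(\uc+(1,\dots,1))$, which directly shows that the conjugated index-$0$ generators preserve each $F(\uc)$. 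This yields the asserted $\affY$-action on the level $L$ Fock space $F(\uc)$.
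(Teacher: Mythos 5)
Your proposal is correct and follows essentially the same route as the paper: the theorem is deduced from Proposition~\ref{prop:sufficient_condition} applied with $T_\infty$, the intertwining hypotheses being exactly Proposition~\ref{prop:cyclic}, whose proof rests on Lemma~\ref{lem:key}, with preservation of each $F(\uc)$ following because $\finY$ and $\affsl$ generate $\affY$ and both preserve it. Your added remarks (the $F_M \to F_{M+L}$ bookkeeping, and the alternative argument via $T_\infty(F(\uc)) = F(\uc+(1,\dots,1))$) are accurate elaborations of points the paper leaves implicit, not a different method.
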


\subsection{Proof of Lemma~\ref{lem:key}}

For $P \in \bbC[z_1^{\pm 1}, \ldots, z_n^{\pm 1}]$, $\bw \in W^{\otimes n}$, and $\bv \in V^{\otimes n}$, let $\bigwedge P \otimes \bw \otimes \bv$ denote the image of $P \otimes \bw \otimes \bv$ in $\bigwedge^n U \cong (\bbC[z_1^{\pm 1}, \ldots, z_n^{\pm 1}] \otimes W^{\otimes n}) \otimes_{\bbC \frakS_n} V^{\otimes n}$.

We recall the notation; $M$ is a fixed integer and $s \in \{ 0,1, \ldots, NL-1 \}$ is uniquely determined by
\[
	M \equiv s \mod NL.
\]
Fix integers $l \geq d \geq 0$ and put $n = s+lNL$.
An integer $m$ is uniquely determined by
\[
	M - (s+lNL) = -mNL.
\]

Before starting a proof, we provide two lemmas.

\begin{lem}\label{lem:vanish}
If $v \in \bigwedge^n U$ has a factor $z^m w_b v_a$ with $a \leq N-1$ then we have
\[
	T(v) \wedge \ket{-mNL} = 0.
\]
If $v \in \bigwedge^n U$ has a factor $z^m w_b v_a$ with $a \leq N-2$ then we have
\[
	T^2(v) \wedge \ket{-mNL} = 0.
\]
\end{lem}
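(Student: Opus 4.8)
The plan is to reduce the statement to a collision of wedge factors. By linearity it suffices to treat a single basis wedge $v = \bu_{k_1} \wedge \cdots \wedge \bu_{k_n}$ having the distinguished factor, and since $T$ permutes the basis $\{\bu_k\}_{k \in \bbZ}$ of $U$, the image $T(v)$ is again, up to sign, a basis wedge with pairwise distinct factors. A semi-infinite wedge of pairwise distinct basis vectors is nonzero, while $\ket{-mNL} = \bu_{-mNL} \wedge \bu_{-mNL-1} \wedge \cdots$ is the wedge of all $\bu_j$ with $j \leq -mNL$. Hence $T(v) \wedge \ket{-mNL}$ vanishes exactly when some factor of $T(v)$ has index $\leq -mNL$, and the whole lemma reduces to showing that the distinguished factor is sent by $T$ (resp. $T^2$) to a basis vector of index $\leq -mNL$.

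First I would follow that one factor. Writing it as $z^m w_b v_a = \bu_{a - N(b + Lm)}$, the key point is that the hypothesis $a \leq N-1$ (resp. $a \leq N-2$) prevents the wrap-around $v_N \mapsto v_1$ from being triggered under one (resp. two) applications of $T$, so the exponent of $z$ is left unchanged and $T(z^m w_b v_a) = z^m w_b v_{a+1}$ (resp. $T^2(z^m w_b v_a) = z^m w_b v_{a+2}$). Converting back through $z^m w_b v_c = \bu_{c - N(b+Lm)}$, the image factor has index $(a+1) - N(b+Lm)$ in the first case and $(a+2) - N(b+Lm)$ in the second.

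It then remains to compare these indices with $-mNL = -N(Lm)$. The inequality $(a+1) - N(b+Lm) \leq -mNL$ is equivalent to $a+1 \leq Nb$, which holds because $a + 1 \leq N \leq Nb$ for every $b \geq 1$; the two-fold case reduces in exactly the same way to $a + 2 \leq Nb$, valid since $a + 2 \leq N \leq Nb$. Thus in both cases the image of the distinguished factor already occurs among the factors of $\ket{-mNL}$, and the semi-infinite wedge vanishes.

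The argument is essentially bookkeeping, so I do not expect a genuine obstacle; the single point demanding care is the wrap-around. The bounds $a \leq N-1$ and $a \leq N-2$ are precisely what guarantees that one, respectively two, applications of $T$ stay within $v_1, \ldots, v_N$ and keep the $z$-exponent equal to $m$. Were a wrap-around to occur, the $z$-power would drop and the index would jump up by $N(L-1)+1$, possibly exceeding $-mNL$ and breaking the comparison; keeping the exponent fixed at $m$ is exactly what forces the image to lie at or below the vacuum level.
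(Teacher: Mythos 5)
Your proof is correct and follows essentially the same route as the paper's: track the distinguished factor $\bu_k = z^m w_b v_a$, observe that the bounds $a \leq N-1$ (resp.\ $a \leq N-2$) prevent the wrap-around so that $T$ (resp.\ $T^2$) just shifts the index to $k+1$ (resp.\ $k+2$), and check that this index is $\leq -mNL$, forcing a repeated factor against $\ket{-mNL}$. The paper's proof is merely a terser version of the same computation.
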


\begin{proof}
Write $\bu_k = z^m w_b v_a$ so that
\[
	k = a - N(b + Lm) = -mNL + a - bN.
\]
We see that $T(\bu_k) = \bu_{k+1}$ and $k+1 \leq -mNL$ if $a \leq N-1$.
This implies the first assertion.

Similarly if $a \leq N-2$ then we have $T^2(\bu_k) = \bu_{k+2}$ and $k+2 \leq -mNL$.
This implies the second assertion.
\end{proof}

Next lemma is proved in \cite[Proof of Proposition 5 (ii)]{MR1600311}.

\begin{lem}\label{lem:degree}
Fix $k \geq 1$ and let $P$ be an element of
\[
	\text{\rm span}\{ z_1^{m_1} \cdots z_{n+k}^{m_{n+k}} \mid m_1, \ldots, m_{n+k} \leq m, \ \#\{ i \mid m_i = m \} < k \}.
\]
Then we have
\[
	\bigwedge P \otimes \bw \otimes \bv \in \bigoplus_{l' > l} V_{M,n+k}^{l'}
\]
for $\bw \in W^{\otimes n+k}$ and $\bv \in V^{\otimes n+k}$.
\end{lem}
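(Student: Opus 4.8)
The plan is to analyze how a monomial $P = z_1^{m_1}\cdots z_{n+k}^{m_{n+k}}$ with all $m_i \le m$ and strictly fewer than $k$ of the exponents equal to $m$ contributes to the decomposition $V_{M,n+k} = \bigoplus_{l'\ge 0} V_{M,n+k}^{l'}$. First I would recall that the degree of a basis vector $\bu_{\uk}$ is $\sum_i (m_i^0 - m_i(\uk))$, where $m_i^0$ are the exponents of the reference vacuum $\ket{M}$ truncated to $n+k$ factors. Because $\bigwedge P \otimes \bw \otimes \bv$ expands in the antisymmetrized basis, it suffices to bound from below the degree of every basis vector $\bu_{\uk}$ that appears with nonzero coefficient. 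The key point is that wedging antisymmetrizes, so the exponents $m_i(\uk)$ of any surviving basis vector are, as a multiset, dominated by the exponents appearing in $P \otimes \bw \otimes \bv$ after sorting the slots $\bu_{k_1} \wedge \cdots$ in decreasing order of index.

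The central estimate I would establish is that the reference configuration $\ket{M}$, truncated to its top $n+k$ factors, has exactly $k$ of its exponents $m_i^0$ equal to the maximal value $m$ (this follows from $n = s + lNL$ and $M - n = -mNL$, so the top $n+k$ slots of $\ket{M}$ consist of the full block of $NL$ vectors at $z$-degree $m$ together with all higher blocks, and the last $k$ slots dip exactly into the degree-$m$ block). Since $P$ has strictly fewer than $k$ factors at exponent $m$, any basis vector $\bu_{\uk}$ occurring in $\bigwedge P \otimes \bw \otimes \bv$ has strictly fewer than $k$ factors with $m_i(\uk) = m$, while all $m_i(\uk) \le m$. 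Comparing the sorted multiset $\{m_i(\uk)\}$ against $\{m_i^0\}$, the deficit in the count of top-value exponents forces $\sum_i (m_i^0 - m_i(\uk)) \ge 1$; in fact strict inequality $l' > l$ is exactly the statement that at least one unit of $z$-degree has been lost relative to the vacuum. I would make this precise by a counting argument: the total $z$-degree $\sum_i m_i(\uk)$ of any surviving vector is at most $\sum_i m_i^0 - 1$, because replacing a factor at exponent $m$ by one at exponent $\le m-1$ costs at least one unit and the hypothesis guarantees this replacement is forced for at least one slot.

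I expect the main obstacle to be the bookkeeping required to show that the degree genuinely drops, rather than merely staying the same. The subtlety is that antisymmetrization in $\bigwedge^{n+k} U$ can redistribute exponents among the slots, so one cannot simply read off $m_i(\uk)$ from the original exponents $m_i$ of $P$; one must argue at the level of multisets that the number of factors at the top exponent $m$ cannot exceed the number present in $P$, and then convert the strict inequality $\#\{i \mid m_i = m\} < k$ into a strict inequality on total degree. The cleanest route is to invoke the cited computation in \cite[Proof of Proposition 5 (ii)]{MR1600311} for the combinatorial heart of the argument and to supply here only the translation into the present notation, namely the identification of the degree function and the verification that the reference vacuum contributes exactly $k$ top-exponent slots among its first $n+k$ entries.
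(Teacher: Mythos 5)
There is a genuine gap, and it is precisely in the step you wave at as ``the combinatorial heart.'' Your counting argument establishes only that the degree is at least $1$: you argue that the vacuum has $k$ slots at the top exponent $m$ while $P$ has fewer than $k$ factors there, so at least one slot is forced down, whence $\sum_i m_i(\uk) \leq \sum_i m_i^0 - 1$. But the lemma asserts $\deg \bigwedge P \otimes \bw \otimes \bv \geq l+1$, and your sentence ``strict inequality $l' > l$ is exactly the statement that at least one unit of $z$-degree has been lost relative to the vacuum'' is false unless $l = 0$: the degree is $\sum_i (m_i^0 - m_i(\uk))$, the truncated vacuum itself has degree $0$, and one lost unit means degree $1$, not degree $> l$. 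Relatedly, you misidentify the obstacle --- the issue is not distinguishing ``the degree drops'' from ``the degree stays the same,'' but showing that the drop is at least $l+1$ for an arbitrary fixed $l \geq d$. Deferring to \cite[Proof of Proposition 5 (ii)]{MR1600311} does not repair this, because the part of the argument you do sketch is exactly the part that is insufficient.

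The missing idea is a multiplicity bound coming from the wedge itself: since $\dim(W \otimes V) = NL$, for a fixed exponent $\mu$ there are only $NL$ distinct vectors $z^{\mu} w_b v_a$, so a monomial in which some $z$-exponent occurs more than $NL$ times has vanishing image in $\bigwedge^{n+k} U$. This is what turns a single forced drop into a cascade of $l+1$ forced drops. Concretely (this is the paper's proof): order the exponents so that $m_1 \leq \cdots \leq m_{n+k}$; the hypothesis gives $m_{n+1} \leq m - 1$ (otherwise $m_{n+1} = \cdots = m_{n+k} = m$ would be $k$ exponents equal to $m$), and then, assuming the wedge is nonzero, the multiplicity bound forces
\[
	m > m_{n+1} > m_{n+1-NL} > \cdots > m_{n+1-lNL},
\]
since equality between $m_{n+1-jNL}$ and $m_{n+1-(j-1)NL}$ would produce $NL+1$ coincident exponents. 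The vacuum exponents at these $l+1$ positions are $m, m-1, \ldots, m-l$, so each position contributes at least $1$ to $\sum_i (m_i^0 - m_i(\uk))$; the same chain of inequalities together with monotonicity shows $m_i(\uk) \leq m_i^0$ at every position, so all other contributions are nonnegative and the element lies in $V_{M,n+k}$, giving $\deg \geq l+1$. A secondary inaccuracy in your setup: the truncated vacuum has exactly $k$ exponents equal to $m$ only when $k \leq NL$ (for larger $k$ it has $NL$ of them and slots at exponents $> m$); this is harmless for the applications ($k = L$ and $k = 2L$), but the lemma is stated for all $k \geq 1$, and in any case that count is not what drives the proof --- the bound of $NL$ repetitions per exponent value is.
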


\begin{proof}
We may assume $m_1 \leq \cdots \leq m_{n+k}$.
Then by the assumption on $P$, we have $m_{n+1} < m$.
Suppose $\bigwedge P \otimes \bw \otimes \bv \neq 0$.
Then we see that $m > m_{n+1} > m_{n+1-NL} > \cdots > m_{n+1-lNL}$.
This implies
\[
	\deg \bigwedge P \otimes \bw \otimes \bv \geq l+1.
\]
\end{proof}

Let us start a proof of Lemma~\ref{lem:key}.
It is enough to show the equalities for $X= X_{i,0}^{\pm}, H_{i,0}, X_{i,1}^-$ ($i=1, \ldots, N-1$) since these elements generate $\finY$.

\begin{lem}
We have
\[
	X(v \wedge v_{L,N}) = Xv \wedge v_{L,N}
\]
for $X= X_i^{\pm}, H_i$ $(i=1, \ldots, N-2)$, and
\[
	X(v \wedge v_{L,N-1}) = Xv \wedge v_{L,N-1}
\]
for $X= X_{N-1}^{\pm}, H_{N-1}$.
\end{lem}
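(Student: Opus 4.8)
The plan is to reduce the statement to the observation that the degree-zero generators $X_i^{\pm}=X_{i,0}^{\pm}$ and $H_i=H_{i,0}$ act on the wedge space simply as the derivation induced by the $\fraksl_N$-action on the $V$-factor of $U$. Indeed, the Schur--Weyl formula $X(m\otimes\bv)=\sum_k m\otimes(X)_k\bv$ together with the identification $\bigwedge^n U\cong(\bbC[z_1^{\pm1},\ldots,z_n^{\pm1}]\otimes W^{\otimes n})\otimes_{\bbC\frakS_n}V^{\otimes n}$ shows that $X=X_i^{\pm},H_i$ acts on a wedge by
\[
X(u_1\wedge\cdots\wedge u_p)=\sum_{k=1}^{p}u_1\wedge\cdots\wedge Xu_k\wedge\cdots\wedge u_p,
\]
where $X$ operates through the $V$-component of each factor $u_k=z^{m_k}w_{b_k}v_{a_k}$, via $X_i^+=E_{i,i+1}$, $X_i^-=E_{i+1,i}$, $H_i=E_{ii}-E_{i+1,i+1}$. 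Since this action is an honest derivation (no Koszul signs appear), splitting the sum into the factors of $v$ and those of the auxiliary vector yields
\[
X(v\wedge v_{L,N})=Xv\wedge v_{L,N}+v\wedge X(v_{L,N})
\]
and likewise for $v_{L,N-1}$, so it suffices to prove $X(v_{L,N})=0$ for $i=1,\ldots,N-2$ and $X(v_{L,N-1})=0$ for $i=N-1$.

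For $i=1,\ldots,N-2$ this is immediate: every factor of $v_{L,N}$ has $V$-component $v_N$, and a one-line check gives $E_{i,i+1}v_N=E_{i+1,i}v_N=(E_{ii}-E_{i+1,i+1})v_N=0$ whenever $i\leq N-2$ (here $i+1\leq N-1$ and $i\neq N$). Hence every summand of $X(v_{L,N})$ vanishes and the first equality follows termwise.

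The case $i=N-1$ is the real content, since $X_{N-1}^{\pm}$ and $H_{N-1}$ do not annihilate $v_N$ or $v_{N-1}$; here the vanishing of $X(v_{L,N-1})$ must come from cancellation. I would organise the $2L$ factors of $v_{L,N-1}$ into the $L$ blocks $z^m w_b v_N,\ z^m w_b v_{N-1}$ $(b=1,\ldots,L)$ and argue block by block. For $X_{N-1}^+=E_{N-1,N}$ the factor $z^m w_b v_N$ is sent to $z^m w_b v_{N-1}$, which already occurs as its neighbour, so the resulting wedge has a repeated entry and vanishes, while $z^m w_b v_{N-1}$ is killed outright; the argument for $X_{N-1}^-=E_{N,N-1}$ is symmetric. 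For $H_{N-1}=E_{N-1,N-1}-E_{NN}$ the factor $z^m w_b v_N$ is an eigenvector of eigenvalue $-1$ and $z^m w_b v_{N-1}$ of eigenvalue $+1$, so the two contributions of the block are equal multiples of $v_{L,N-1}$ with opposite signs and cancel. Summing over $b$ gives $X(v_{L,N-1})=0$, and the second equality follows.

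The hard part is thus the $N-1$ case: for $i\leq N-2$ each summand dies on its own, but for $i=N-1$ one must combine the antisymmetry of the wedge (duplicated factors vanish) with the opposite $H_{N-1}$-weights of the two vectors in each block to produce the needed cancellation. No sign subtleties arise elsewhere because the degree-zero generators act as derivations rather than as odd operators.
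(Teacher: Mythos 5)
Your proof is correct and follows essentially the same route as the paper: both reduce to showing $X v_{L,N}=0$ for $i\leq N-2$ (each factor $v_N$ is annihilated) and $X v_{L,N-1}=0$ for $i=N-1$ via the blockwise analysis of $z^m w_b v_N \wedge z^m w_b v_{N-1}$, where the $X_{N-1}^{\pm}$ terms die by repeated wedge factors and the $H_{N-1}$ terms cancel by opposite weights. Your write-up merely makes explicit the derivation property and the eigenvalue computation that the paper leaves implicit.
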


\begin{proof}
We have $X v_{L,N} = 0$ for $X= X_{i}^{\pm}, H_{i}$ ($i=1, \ldots, N-2$) since factors of $v_{L,N}$ concerning the $\fraksl_N$-action is only $v_N$.
This implies $X(v \wedge v_{L,N}) = Xv \wedge v_{L,N}$.

We have
\[
	X_{N-1}^+ v_N \wedge v_{N-1} = v_{N-1} \wedge v_{N-1} = 0
\]
and 
\[
	v_N \wedge X_{N-1}^+ v_{N-1} = 0.
\]
Hence we have $X_{N-1}^+ v_{L,N-1} = 0$.
Similarly we have $X_{N-1}^- v_{L,N-1} = 0$.
We have	$H_{N-1} v_{L,N-1} =  0$ since $H_{N-1} (v_N \wedge v_{N-1}) = 0$.
Therefore we have $X(v \wedge v_{L,N-1}) = Xv \wedge v_{L,N-1}$ for $X= X_{N-1}^{\pm}, H_{N-1}$.
\end{proof}

It remains to prove
\begin{equation}
	T X_{i,1}^{-} (v \wedge v_{L,N}) \wedge \ket{-mNL} = T (X_{i,1}^{-} v \wedge v_{L,N}) \wedge \ket{-mNL} \label{eq:remain1}
\end{equation}
for $i=1, \ldots, N-2$, and
\begin{equation}
	T^2 X_{N-1,1}^{-} (v \wedge v_{L,N-1}) \wedge \ket{-mNL} = T^2 (X_{N-1,1}^{-} v \wedge v_{L,N-1}) \wedge \ket{-mNL} \label{eq:remain2}.
\end{equation}

First we prove (\ref{eq:remain1}).
Recall that
\begin{equation*}
	X_{i,1}^{-} = J(X_i^-) - \dfrac{\hbar}{4} \omega_i,
\end{equation*}
where
\begin{equation*}
	\omega_i = \sum_{p=i+1}^N (E_{i+1,p} E_{p,i} + E_{p,i} E_{i+1,p}) - \sum_{p=1}^i (E_{i+1,p} E_{p,i} + E_{p,i} E_{i+1,p}).
\end{equation*}
Let $\Delta$ be the coproduct on $\gl_N$ and put
\begin{equation*}
	\begin{split}
		\omega_i' &= \Delta(\omega_i) - \omega_i \otimes 1 \\
		&= 1 \otimes \omega_i + 2 \left( \sum_{p=i+1}^N (E_{i+1,p} \otimes E_{p,i} + E_{p,i} \otimes E_{i+1,p}) - \sum_{p=1}^i (E_{i+1,p} \otimes E_{p,i} + E_{p,i} \otimes E_{i+1,p}) \right).
	\end{split}
\end{equation*}
Then we have
\begin{equation*}
	X_{i,1}^- (v \wedge v_{L,N}) = J(X_i^-) (v \wedge v_{L,N}) - \dfrac{\hbar}{4}(\omega_i v \wedge v_{L,N}) - \dfrac{\hbar}{4} \omega_i' (v \wedge v_{L,N}).
\end{equation*}
Therefore it is enough to show
\begin{equation}
	T J(X_i^-) (v \wedge v_{L,N}) \wedge \ket{-mNL} = T (J(X_i^-) v \wedge v_{L,N}) \wedge \ket{-mNL} \label{eq:J}
\end{equation}
and
\begin{equation}
	T \omega_i' (v \wedge v_{L,N}) \wedge \ket{-mNL} = 0. \label{eq:omega}
\end{equation}
We show (\ref{eq:J}).
Write $v \wedge v_{L,N} = \bigwedge P \otimes \bw \otimes \bv$, where
\begin{align*}
	P &= z_1^{m_1} \cdots z_n^{m_n} (z_{n+1} \cdots z_{n+L})^m, \\
	\bw &= w_{b_1} \otimes \cdots \otimes w_{b_n} \otimes w_1 \otimes \cdots \otimes w_L, \\
	\bv &= v_{a_1} \otimes \cdots \otimes v_{a_n} \otimes v_N^{\otimes L}.
\end{align*}
We have $(X_i^-)_k \bv = 0$ for $k > n$.
This implies
\[
	J(X_i^-) (v \wedge v_{L,N}) = \sum_{k=1}^n \bigwedge (P \otimes \bw) y_k^{(n+L)} \otimes (X_i^-)_k \bv.
\]
Then (\ref{eq:J}) follows from
\begin{equation}
	T \left( \bigwedge (P \otimes \bw)(y_k^{(n+L)} - y_k^{(n)}) \otimes (X_i^-)_k \bv \right) \wedge \ket{-mNL}. \label{eq:diff}
\end{equation}
Since we have
\[
	y_k^{(n+L)} - y_k^{(n)} = -(d_k^{(n+L)} - d_k^{(n)}) + \dfrac{\hbar}{2} \sum_{j=1}^L s_{k,n+j},
\]
the equality (\ref{eq:diff}) follows from the next lemma.

\begin{lem}\label{lem:Jvanish}
\begin{enumerate}
Suppose $i=1, \ldots, N-2$ and $k=1, \ldots, n$.
\item
We have
\[
	\bigwedge (d_k^{(n+L)} - d_k^{(n)})(P \otimes \bw) \otimes (X_i^-)_k \bv = 0.
\]
\item
We have
\[
	T \left( \bigwedge (P \otimes \bw)s_{k,n+j} \otimes (X_i^-)_k \bv \right) \wedge \ket{-mNL} = 0.
\]
\end{enumerate}
\end{lem}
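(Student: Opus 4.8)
The plan is to treat the two identities separately. Statement (ii) will fall out of the antisymmetry of the wedge together with Lemma~\ref{lem:vanish}, whereas (i) is the substantive part and rests on the explicit form of the matrix Dunkl--Cherednik operators. Throughout set $\bv' = (X_i^-)_k\bv$; since $i \le N-2$, the $k$-th tensor factor of $\bv'$ is $v_{i+1}$ with $i+1 \le N-1$, and this single feature drives both arguments. I also record the bound $m_j \le m_j^0 \le m_n^0 = m-1 < m$, valid for every factor $z^{m_j}w_{b_j}v_{a_j}$ of $v$, so that the relevant branch of Lemma~\ref{lem:standard}(i) is always $m_i < m_j$.

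I dispatch (ii) first. Under the right $\affH$-action the transposition $s_{k,n+j}$ is realized as $-K_{k,n+j}P_{k,n+j}$, while in the identification $\bigwedge^{n+L}U \cong (\bbC[z_1^{\pm1},\ldots,z_{n+L}^{\pm1}] \otimes W^{\otimes n+L}) \otimes_{\bbC\frakS_{n+L}} V^{\otimes n+L}$ the defining relation of the tensor product reads $\bigwedge K_{ab}P_{ab}(Q \otimes \bw) \otimes \bv = -\bigwedge (Q \otimes \bw) \otimes \tau_{ab}\bv$, where $\tau_{ab}$ transposes the $a$-th and $b$-th $V$-factors. The two signs cancel, so that $\bigwedge (P\otimes\bw)s_{k,n+j} \otimes \bv' = \bigwedge (P\otimes\bw) \otimes \tau_{k,n+j}\bv'$. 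In the resulting wedge the $(n+j)$-th factor is $z^m w_j v_{i+1}$ with $i+1 \le N-1$, so Lemma~\ref{lem:vanish} applies directly and annihilates $T(\,\cdot\,)\wedge\ket{-mNL}$.

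For (i) I expand the difference; only the interaction with the appended particles and the shift $\tfrac{n+L}{2L}-\tfrac{n}{2L}=\tfrac12$ survive, so for $k\le n$
\[
	d_k^{(n+L)}-d_k^{(n)} = -c\Big(\tfrac12 + \sum_{s=1}^{L}\big(\tfrac{z_{n+s}}{z_k-z_{n+s}}(1-K_{k,n+s})P_{k,n+s} + r_{k,n+s}\big)\Big).
\]
Evaluating $r_{k,n+s}$ on $\bw$ and the rational term on $P$ by Lemma~\ref{lem:standard} and splitting according to $s<b_k$, $s=b_k$, $s>b_k$, I expect the following cancellations. On $\bigwedge P\otimes\bw\otimes\bv'$ the constant $\tfrac12$, the diagonal $r$-matrix contribution ($s=b_k$, value $\tfrac12$) and the leading $r=0$ part of the rational term ($s=b_k$, value $-1$) sum to zero. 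Every off-diagonal contribution ($s\ne b_k$) vanishes in the wedge: after $P_{k,n+s}$ the $(n+s)$-th factor becomes $z^m w_{b_k}v_N$, which equals the $(n+b_k)$-th factor of $v_{L,N}$, producing a repeated factor.

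What survive are the tails $-\bigwedge (z_k^r z_{n+s}^{-r}P) \otimes (P_{k,n+s}\bw) \otimes \bv'$ with $1\le r\le m-m_k-1$, and killing these is the main obstacle. I would argue by degree. Since $z_k^r z_{n+s}^{-r}$ has total $z$-degree $0$, and neither $P_{k,n+s}$ nor the $V$-twist $(X_i^-)_k$ changes the total $z$-exponent, every nonzero tail would be a basis vector of the same degree as $v\wedge v_{L,N}$; and $\deg(v\wedge v_{L,N})=d\le l$, because the $L$ newly filled vacuum slots $\bu_{M-n},\dots,\bu_{M-n-L+1}$ each carry exponent $m$. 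On the other hand, in $z_k^r z_{n+s}^{-r}P$ the exponents at positions $k$ and $n+s$ have dropped to $\le m-1$, so at most $L-1<L$ of the exponents equal $m$ while all are $\le m$; Lemma~\ref{lem:degree} then places the tail in $\bigoplus_{l'>l}V_{M,n+L}^{l'}$. Degrees $d\le l$ and $>l$ being incompatible, each tail vanishes, which finishes (i). The points needing care are the sign bookkeeping in the wedge relation and checking that the total $z$-exponent is preserved term by term.
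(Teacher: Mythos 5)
Your proposal is correct and follows essentially the same route as the paper: in (i) you expand $d_k^{(n+L)}-d_k^{(n)}$ via Lemma~\ref{lem:standard}, cancel the constant against the $s=b_k$ terms, kill the off-diagonal terms by a repeated factor $z^m w_{b_k}v_N$ in the wedge, and dispose of the tails $P'$ by the degree contradiction with Lemma~\ref{lem:degree}; in (ii) you exhibit a factor $z^m w_j v_{i+1}$ with $i+1\le N-1$ and invoke Lemma~\ref{lem:vanish}. The only (immaterial) differences are that you transport the permutation $s_{k,n+j}$ to the $V$-side of the tensor product rather than to the $(z,W)$-side as in the paper, and that you make explicit the bound $m_k\le m_n^0=m-1$ justifying the branch of Lemma~\ref{lem:standard}, which the paper leaves implicit.
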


\begin{proof}
We prove (i).
By the definition, we have
\[
	d_k^{(n+L)} - d_k^{(n)} = \hbar \left( \dfrac{1}{2} + \sum_{j=1}^L \left( \dfrac{z_{n+j}}{z_k - z_{n+j}}(1 - K_{k,n+j})P_{k,n+j} + r_{k,n+j} \right) \right).
\]
By Lemma~\ref{lem:standard}, we have
\begin{equation*}
	\dfrac{z_{n+j}}{z_k - z_{n+j}}(1 - K_{k,n+j}) P = - (P + P'),
\end{equation*}
where
\[
	P'= \sum_{r=1}^{m-m_k-1} z_k^r z_{n+j}^{-r} P,
\]
and
\begin{equation*}
	\sum_{j=1}^L r_{k,n+j} \bw = \dfrac{1}{2}\bw + \sum_{\substack{1 \leq j \leq L\\ j < b_k}} P_{k,n+j}\bw.
\end{equation*}
Hence we have
\[
	(d_k^{(n+L)} - d_k^{(n)})(P \otimes \bw) = \hbar \left( -\sum_{\substack{1 \leq j \leq L\\ j > b_k}} P \otimes P_{k,n+j}\bw - \sum_{j=1}^L P' \otimes P_{k,n+j}\bw \right).
\]
Provided $j > b_k$, we see that
\[
	\bigwedge P \otimes P_{k,n+j}\bw \otimes (X_i^-)_k \bv = 0
\]
since $P \otimes P_{k,n+j}\bw \otimes (X_i^-)_k \bv$ has $z^m w_{b_k} v_N$ as its $(n+b_k)$-th and $(n+j)$-th factors.
Moreover we have
\begin{equation}
	\bigwedge P' \otimes P_{k,n+j}\bw \otimes (X_i^-)_k \bv = 0. \label{eq:lower_degree}
\end{equation}
Indeed, if the left-hand side is nonzero then it has degree $d$.
However Lemma~\ref{lem:degree} implies that its degree is greater than $l$, which is contradiction.

We prove (ii).
We may assume $a_k=i$, since $(X_i^-)_k \bv = 0$ unless $a_k=i$.
Then $(P \otimes \bw)s_{k,n+j} \otimes (X_i^-)_k \bv$ has $z^m w_j v_{i+1}$ as its $k$-th factor.
By the assumption we have $i+1 \leq N-1$.
Then Lemma~\ref{lem:vanish} implies the assertion.
\end{proof}

We show (\ref{eq:omega}).
In $\omega_i'(v \wedge v_{L,N})$, all terms but concerning
\[
	E_{p,i}v \wedge E_{i+1,p}v_{L,N}
\]
obviously vanish.
Any nonzero summand of $E_{i+1,p}v_{L,N}$ has a factor $z^m w_j v_{i+1}$ for some $j$ with $i+1 \leq N-1$.
Then Lemma~\ref{lem:vanish} implies
\[
	T(E_{p,i}v \wedge E_{i+1,p}v_{L,N}) \wedge \ket{-mNL} = 0.
\]
Now the proof of (\ref{eq:remain1}) is complete.

Next we prove (\ref{eq:remain2}).
It is enough to show
\begin{equation}
	T^2 J(X_{N-1}^-) (v \wedge v_{L,N-1}) \wedge \ket{-mNL} = T^2 (J(X_{N-1}^-) v \wedge v_{L,N-1}) \wedge \ket{-mNL} \label{eq:J_square}
\end{equation}
and
\begin{equation}
	T^2 \omega_{N-1}' (v \wedge v_{L,N-1}) \wedge \ket{-mNL} = 0. \label{eq:omega_square}
\end{equation}
We show (\ref{eq:J_square}).
Write $v \wedge v_{L,N-1} = \bigwedge P \otimes \bw \otimes \bv$, where
\begin{align*}
	P &= z_1^{m_1} \cdots z_n^{m_n} (z_{n+1} \cdots z_{n+2L})^m, \\
	\bw &= w_{b_1} \otimes \cdots \otimes w_{b_n} \otimes w_1^{\otimes 2} \otimes \cdots \otimes w_L^{\otimes 2}, \\
	\bv &= v_{a_1} \otimes \cdots \otimes v_{a_n} \otimes (v_N \otimes v_{N-1})^{\otimes L}.
\end{align*}
Consider the difference
\begin{equation}
	\begin{split}
		& J(X_{N-1}^-) (v \wedge v_{L,N-1}) - J(X_{N-1}^-) v \wedge v_{L,N-1} \\
		&= -\sum_{k=1}^n \bigwedge (d_k^{(n+2L)} - d_k^{(n)})(P \otimes \bw) \otimes (X_{N-1}^-)_k \bv \\
		& \quad + \dfrac{\hbar}{2} \sum_{k=1}^n \sum_{j=1}^L \left( \bigwedge (P \otimes \bw) s_{k,n+2j-1} \otimes (X_{N-1}^-)_k \bv + \bigwedge (P \otimes \bw) s_{k,n+2j} \otimes (X_{N-1}^-)_k \bv \right) \\
		& \quad \quad + \sum_{j=1}^L \left( \bigwedge (P \otimes \bw) y_{n+2j-1}^{(n+2L)} \otimes (X_{N-1}^-)_{n+2j-1} \bv + \bigwedge (P \otimes \bw) y_{n+2j}^{(n+2L)} \otimes (X_{N-1}^-)_{n+2j} \bv \right). \label{eq:difference}
	\end{split}
\end{equation}
The term
\[
	\sum_{j=1}^L \bigwedge (P \otimes \bw) y_{n+2j-1}^{(n+2L)} \otimes (X_{N-1}^-)_{n+2j-1} \bv
\]
vanishes since $(X_{N-1}^-)_{n+2j-1} \bv = 0$.
Then substituting
\[
	y_{n+2j}^{(n+2L)} = - d_{n+2j}^{(n+2L)} + \dfrac{\hbar}{2}\left( \sum_{k > n+2j} s_{n+2j,k} - \sum_{k < n+2j} s_{k,n+2j} \right)
\]
to (\ref{eq:difference}), we obtain
\begin{equation*}
	\begin{split}
		& J(X_{N-1}^-) (v \wedge v_{L,N-1}) - J(X_{N-1}^-) v \wedge v_{L,N-1} \\
		&= -\sum_{k=1}^n \bigwedge (d_k^{(n+2L)} - d_k^{(n)})(P \otimes \bw) \otimes (X_{N-1}^-)_k \bv \\
		& \ + \dfrac{\hbar}{2} \sum_{k=1}^n \sum_{j=1}^L \bigwedge (P \otimes \bw) s_{k,n+2j-1} \otimes (X_{N-1}^-)_k \bv + \dfrac{\hbar}{2} \sum_{k=1}^n \sum_{j=1}^L \bigwedge (P \otimes \bw) s_{k,n+2j} \otimes (X_{N-1}^-)_k \bv \\
		& \ \ - \sum_{j=1}^L \bigwedge d_{n+2j}^{(n+2L)}(P \otimes \bw)  \otimes (X_{N-1}^-)_{n+2j} \bv \\
		& \ \ \ + \dfrac{\hbar}{2} \sum_{j=1}^L \sum_{k > n+2j} \bigwedge (P \otimes \bw) s_{n+2j,k} \otimes (X_{N-1}^-)_{n+2j} \bv \\
		& \ \ \ \ - \dfrac{\hbar}{2} \sum_{j=1}^L \sum_{k < n+2j} \bigwedge (P \otimes \bw) s_{k,n+2j} \otimes (X_{N-1}^-)_{n+2j} \bv. %\label{eq:difference}
	\end{split}
\end{equation*}

\begin{lem}\label{lem:vanishing_terms}
We have the following:
\begin{enumerate}
\item
\[
	\bigwedge (d_k^{(n+2L)} - d_k^{(n)})(P \otimes \bw) \otimes (X_{N-1}^-)_k \bv = 0
\]
for $k=1, \ldots, n$,
\item
\[
	\bigwedge (P \otimes \bw) s_{k,n+2j} \otimes (X_{N-1}^-)_k \bv = 0
\]
for $k=1, \ldots, n$ and $j=1, \ldots, L$,
\item
\[
	\bigwedge d_{n+2j}^{(n+2L)}(P \otimes \bw)  \otimes (X_{N-1}^-)_{n+2j} \bv = 0
\]
for $j=1, \ldots, L$,
\item
\[
	\bigwedge (P \otimes \bw) s_{n+2j,k} \otimes (X_{N-1}^-)_{n+2j} \bv = 0
\]
for $j=1, \ldots, L$ and $k > n+2j$,
\item
\[
	\bigwedge (P \otimes \bw) s_{k,n+2j} \otimes (X_{N-1}^-)_{n+2j} \bv = 0
\]
if either \begin{itemize}
\item $n < k < n+2j$; or
\item $k=1, \ldots, n$ and $a_k=N$,
\end{itemize}
\item
\[
	T^2 \left( \bigwedge (P \otimes \bw) s_{k,n+2j} \otimes (X_{N-1}^-)_{n+2j} \bv \right) \wedge \ket{-mNL} = 0
\]
if $k=1, \ldots, n$ and $a_k \leq N-2$.
\end{enumerate}
\end{lem}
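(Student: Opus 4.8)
The plan is to reduce all six identities to three elementary mechanisms, in parallel with the proof of Lemma~\ref{lem:Jvanish}: a wedge with two equal factors $\bu_k$ vanishes; a term that lowers the number of tail exponents equal to $m$ lands in degree $>l$ and so vanishes by Lemma~\ref{lem:degree}, exactly as in~(\ref{eq:lower_degree}); and a factor $z^m w_b v_a$ with $a\le N-2$ is killed by the second assertion of Lemma~\ref{lem:vanish} after applying $T^2$. Before starting I would record two facts used throughout. Since $v\wedge v_{L,N-1}$ is an honest ordered wedge in $F_M$, each head factor $\bu_{k_i}=z^{m_i}w_{b_i}v_{a_i}$ ($i\le n$) satisfies $k_i>-mNL$, which forces $m_i\le m$; the tail exponents are all equal to $m$. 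Consequently every application of the Cherednik kernel falls into the case $m_i\le m_j$ of Lemma~\ref{lem:standard}(i). Second, $(X_{N-1}^-)_k\bv\neq 0$ forces $a_k=N-1$ and produces a $v_N$ at slot $k$, so each surviving term carries an extra $v_N$ beyond the $L$ already present in the odd tail slots.

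For the purely combinatorial identities (ii), (iv), (v), (vi) I would move the transposition $s_{\bullet,\bullet}$ across the balanced tensor product and simply read off the resulting factors. In (ii), swapping slots $k$ and $n+2j$ after $X_{N-1}^-$ puts $z^m w_j v_N$ at both slots $n+2j-1$ and $n+2j$, a repeated factor. In (iv) and in both cases of (v) the transposition likewise forces a coincidence: two tail vectors $z^m w_{j'}v_N$ agree, or (in the case $k\le n$, $a_k=N$ of (v)) the head vector $z^m w_j v_N$ meets the tail $v_N$ in slot $n+2j-1$; in each case the wedge has a repeated factor and vanishes. Part (vi) is the one place the degree vanishing is replaced by $T^2$: after the swap the $k$-th factor becomes $z^m w_j v_{a_k}$ with $a_k\le N-2$, so Lemma~\ref{lem:vanish} gives $T^2(\,\cdot\,)\wedge\ket{-mNL}=0$.

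Parts (i) and (iii), involving the matrix Dunkl--Cherednik operators, carry the real content, and organizing their bookkeeping is the step I expect to be hardest. I would expand $d_k^{(n+2L)}-d_k^{(n)}$ (resp.\ $d_{n+2j}^{(n+2L)}$) from the definition, keeping only the terms that couple the chosen slot to the $2L$ tail slots, and rewrite each using both parts of Lemma~\ref{lem:standard}. Every summand then splits into a diagonal part, proportional to $P\otimes\bw$ with a transposition of $w$-labels, and a lower part $P'$, produced by Lemma~\ref{lem:standard}(i), in which one tail exponent has been lowered below $m$. The diagonal parts either repeat a $w$-label between two $v_N$-bearing tail slots or, in (iii), reproduce the coincidence $z^m w_j v_N$ at slots $n+2j-1,n+2j$, so they vanish as repeated factors; the lower parts strictly decrease the count of tail exponents equal to $m$, hence lie in degree $>l\ge d$ by Lemma~\ref{lem:degree} and vanish as in~(\ref{eq:lower_degree}). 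The delicate point is matching the constant term, the diagonal $r$-matrix contributions, and the $r=0$ piece of the kernel so that what remains is exactly these diagonal and lower contributions; once the cancellations are arranged as in Lemma~\ref{lem:Jvanish}, each surviving piece falls under one of the three mechanisms above.
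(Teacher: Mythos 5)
Your proposal is correct and takes essentially the same route as the paper's own proof: the identical three vanishing mechanisms (repeated wedge factors, the degree bound of Lemma~\ref{lem:degree} applied to the lowered-exponent terms $P'$ exactly as in (\ref{eq:lower_degree}), and Lemma~\ref{lem:vanish} after applying $T^2$), with the transposition terms (ii), (iv), (v), (vi) read off slot by slot and the Dunkl--Cherednik terms (i), (iii) reduced by the same cancellation bookkeeping as in Lemma~\ref{lem:Jvanish}. The only difference is presentational: you state explicitly the preliminary facts (head exponents satisfy $m_i<m$, and $(X_{N-1}^-)_k\bv\neq 0$ forces $a_k=N-1$) that the paper uses implicitly.
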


\begin{proof}
We prove (i).
We have
\[
	d_k^{(n+2L)} - d_k^{(n)} = \hbar \left( 1 + \sum_{j=1}^{2L} \left( \dfrac{z_{n+j}}{z_k - z_{n+j}}(1 - K_{k,n+j})P_{k,n+j} + r_{k,n+j} \right) \right).
\]
By Lemma~\ref{lem:standard}, we have
\begin{equation*}
	\dfrac{z_{n+j}}{z_k - z_{n+j}}(1 - K_{k,n+j}) P = - (P + P'),
\end{equation*}
where
\[
	P'= \sum_{r=1}^{m-m_k-1} z_k^r z_{n+j}^{-r} P,
\]
and
\begin{equation*}
	\sum_{j=1}^{2L} r_{k,n+j} \bw = \bw + \sum_{\substack{1 \leq j \leq L\\ j < b_k}} \left( P_{k,n+2j-1}\bw + P_{k,n+2j}\bw \right).
\end{equation*}
Hence we have
\begin{equation*}
	\begin{split}
		& (d_k^{(n+2L)} - d_k^{(n)})(P \otimes \bw) \\
		&= \hbar \Bigg( 2 P \otimes \bw + \sum_{\substack{1 \leq j \leq L\\ j < b_k}} \Big( P \otimes P_{k,n+2j-1}\bw + P \otimes P_{k,n+2j}\bw \Big) - \sum_{j=1}^{2L} P \otimes P_{k,n+j} \bw - \sum_{j=1}^{2L} P' \otimes P_{k,n+j} \bw \Bigg) \\
		&= \hbar \left( -\sum_{\substack{1 \leq j \leq L\\ j > b_k}} \Big( P \otimes P_{k,n+2j-1}\bw + P \otimes P_{k,n+2j}\bw \Big) - \sum_{j=1}^{2L} P' \otimes P_{k,n+j}\bw \right).
	\end{split}
\end{equation*}
A similar argument as in the proof of Lemma~\ref{lem:Jvanish} (i) implies the assertion.

We prove (ii).
We may assume $a_k=N-1$.
Then $(P \otimes \bw) s_{k,n+2j} \otimes (X_{N-1}^-)_k \bv$ has $z^m w_j v_N$ as its $k$-th and ($n+2j-1$)-th factors.
Hence it vanishes after taking wedge.

We prove (iii).
We calculate $d_{n+2j}^{(n+2L)}(P \otimes \bw)$.
By Lemma~\ref{lem:standard}, we have
\begin{equation*}
	\dfrac{z_{n+2j}}{z_k - z_{n+2j}}(1 - K_{k,n+2j}) P =\begin{cases}
	0 \text{ if } n < k < n+2j, \\
	- (P + P') \text{ if } k=1, \ldots, n,
	\end{cases}
\end{equation*}
where
\[
	P'= \sum_{r=1}^{m-m_k-1} z_k^r z_{n+2j}^{-r} P,
\]
and
\begin{equation*}
	\dfrac{z_{k}}{z_k - z_{n+2j}}(1 - K_{k,n+2j}) P = 0
\end{equation*}
for $k > n+2j$.
Note that for $\bw = w_{b_1} \otimes \cdots \otimes w_{b_n} \otimes w_1^{\otimes 2} \otimes \cdots \otimes w_L^{\otimes 2}$, we see that
\begin{gather*}
	k < n+2j \text{ and } j \leq b_k \text{ imply } 1 \leq k \leq n, \\
	k > n+ 2j \text{ implies } j \geq b_k.
\end{gather*}
Hence we have
\begin{equation*}
	\sum_{k<n+2j} r_{k,n+2j} \bw = \dfrac{1}{2} \sum_{\substack{1 \leq k \leq n\\ j=b_k}}\bw + \sum_{\substack{1 \leq k \leq n\\ j < b_k}} P_{k,n+2j}\bw
\end{equation*}
and
\[
	\sum_{k>n+2j} r_{n+2j,k} \bw = 0.
\]
Therefore
\begin{multline*}
%	\begin{split}
		d_{n+2j}^{(n+2L)}(P \otimes \bw) = \text{(constant)} P \otimes \bw \\
		+ \hbar \left( - \sum_{k=1}^n (P+P') \otimes P_{k,n+2j} \bw + \dfrac{1}{2} \sum_{\substack{1 \leq k \leq n\\ j = b_k}} P \otimes \bw + \sum_{\substack{1 \leq k \leq n\\ j < b_k}} P \otimes P_{k,n+2j}\bw \right).
%	\end{split}
\end{multline*}
We have
\[
	\bigwedge P \otimes \bw \otimes (X_{N-1}^-)_{n+2j} \bv = 0
\]
since $P \otimes \bw \otimes (X_{N-1}^-)_{n+2j} \bv$ has $z^m w_j v_N$ as its ($n+2j-1$)-th and ($n+2j$)-th factors.
We have
\[
	\bigwedge P \otimes P_{k,n+2j}\bw \otimes (X_{N-1}^-)_{n+2j} \bv = 0
\]
since $P \otimes P_{k,n+2j}\bw \otimes (X_{N-1}^-)_{n+2j} \bv$ has $z^m w_{b_k} v_N$ as its ($n+2j$)-th and ($n+2b_k-1$)-th factors.
We have
\[
	\bigwedge P' \otimes P_{k,n+2j}\bw \otimes (X_{N-1}^-)_{n+2j} \bv = 0
\]
by the same reason as (\ref{eq:lower_degree}).

We prove (iv).
If $k=n+2p-1$ for some $p$, then $(P \otimes \bw) s_{n+2j,k} \otimes (X_{N-1}^-)_{n+2j} \bv$ has $z^m w_j v_N$ as its ($n+2j-1$)-th and ($n+2p-1$)-th factors.
If $k=n+2p$, then it has $z^m w_p v_N$ as its ($n+2j$)-th and ($n+2p-1$)-th factors.
Hence in both cases it vanishes after taking wedge.

We prove (v).
First suppose $n < k < n+2j$.
Then $(P \otimes \bw) s_{k,n+2j} \otimes (X_{N-1}^-)_{n+2j} \bv$ has $z^m w_{b_k} v_N$ as its ($n+2b_k-1$)-th and ($n+2j$)-th factors.
Next suppose $k=1, \ldots, n$ and $a_k=N$.
Then it has $z^m w_j v_N$ as its $k$-th and ($n+2j-1$)-th factors.
Hence in both cases it vanishes after taking wedge.

We prove (vi).
Since $(P \otimes \bw) s_{k,n+2j} \otimes (X_{N-1}^-)_{n+2j} \bv$ has $z^m w_j v_{a_k}$ as its $k$-th factor, Lemma~\ref{lem:vanish} implies the assertion.
\end{proof}

We obtain
\begin{multline*}
	T^2 \left( J(X_{N-1}^-) (v \wedge v_{L,N-1}) - J(X_{N-1}^-) v \wedge v_{L,N-1} \right) \wedge \ket{-mNL} = \dfrac{\hbar}{2} \sum_{\substack{1 \leq k \leq n\\ a_k = N-1}} \sum_{j=1}^L \\ 
	T^2 \left( \bigwedge (P \otimes \bw) s_{k,n+2j-1} \otimes (X_{N-1}^-)_k \bv - \bigwedge (P \otimes \bw) s_{k,n+2j} \otimes (X_{N-1}^-)_{n+2j} \bv \right) \wedge \ket{-mNL}.
\end{multline*}
Thus the equality (\ref{eq:J_square}) follows from the next lemma.

\begin{lem}
Suppose $a_k = N-1$.
Then we have
\[
	\bigwedge (P \otimes \bw) s_{k,n+2j-1} \otimes (X_{N-1}^-)_k \bv = \bigwedge (P \otimes \bw) s_{k,n+2j} \otimes (X_{N-1}^-)_{n+2j} \bv
\]
for $k=1, \ldots, n$ and $j=1, \ldots,L$.
\end{lem}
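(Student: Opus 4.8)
The plan is to prove the identity by a direct computation in the semi-infinite wedge, showing that both sides are wedges of the same three affected factors, listed in orders that differ by an even permutation. First I would record the relevant actions. Recall that $X_{N-1}^- = E_{N,N-1}$, so that $E_{N,N-1} v_{N-1} = v_N$ and $E_{N,N-1} v_a = 0$ otherwise. Since we assume $a_k = N-1$, the operator $(X_{N-1}^-)_k$ replaces the $k$-th factor $v_{a_k} = v_{N-1}$ of $\bv$ by $v_N$ and fixes all other factors. On the other hand, by the definition of $v_{L,N-1}$ the $(n+2j)$-th factor of $\bv$ is $v_{N-1}$, so $(X_{N-1}^-)_{n+2j}$ replaces that factor by $v_N$ and fixes the rest; in particular it leaves the $k$-th factor equal to $v_{N-1}$.

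Next I would record the effect of the right action of a transposition on $P \otimes \bw$. Since $s_i$ acts as $-K_{i,i+1}P_{i,i+1}$, a transposition $s_{k,l}$ acts by simultaneously interchanging the $z$-exponents (via $K_{k,l}$) and the $W$-labels (via $P_{k,l}$) at positions $k$ and $l$, together with an overall sign $-1$. The crucial observation is that both sides of the asserted equality involve exactly one such transposition, so the two factors $-1$ coincide and cancel when the sides are compared; I therefore do not need to pin down this sign precisely.

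Then I would write out the three factors that are actually modified, namely those in positions $k$, $n+2j-1$, $n+2j$, all other positions being untouched and manifestly agreeing on both sides. On the left-hand side, after applying $s_{k,n+2j-1}$ to $(P,\bw)$ and $(X_{N-1}^-)_k$ to $\bv$, these three positions carry
\[
	z^m w_j v_N, \qquad z^{m_k} w_{b_k} v_N, \qquad z^m w_j v_{N-1}
\]
respectively. On the right-hand side, after applying $s_{k,n+2j}$ and $(X_{N-1}^-)_{n+2j}$, the same three positions carry
\[
	z^m w_j v_{N-1}, \qquad z^m w_j v_N, \qquad z^{m_k} w_{b_k} v_N .
\]
Thus both sides are wedges of the identical multiset $\{ z^m w_j v_N,\ z^m w_j v_{N-1},\ z^{m_k} w_{b_k} v_N \}$.

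Finally I would compare the two orderings: the right-hand ordering is obtained from the left-hand one by moving the third factor to the front, a cyclic (hence even) permutation, so the two wedges agree with sign $+1$; combined with the cancellation of the two transposition signs, this yields the claimed equality. The only degenerate case is $(m_k, b_k) = (m, j)$, in which $z^{m_k} w_{b_k} v_N = z^m w_j v_N$, so both sides contain a repeated factor and vanish, and the identity holds trivially. I expect the only genuine bookkeeping to be the careful tracking of the overall signs and the verification that no factor outside positions $k$, $n+2j-1$, $n+2j$ is disturbed; once that is in place the equality is immediate from the antisymmetry of the wedge.
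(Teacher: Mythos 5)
Your proof is correct and follows essentially the same route as the paper: the paper likewise lists the three factors in positions $k$, $n+2j-1$, $n+2j$ on each side, observes that they form the same multiset arranged by an even (cyclic) permutation while all remaining factors are untouched, and concludes that the two elements coincide after taking wedge. Your explicit bookkeeping of the sign $-1$ carried by each single transposition and of the evenness of the $3$-cycle merely spells out what the paper leaves implicit in ``coincide after taking wedge.''
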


\begin{proof}
Suppose $a_k=N-1$.
Then $(P \otimes \bw) s_{k,n+2j-1} \otimes (X_{N-1}^-)_k \bv$ has
\[
	z^m w_j v_N,\ z^{m_k} w_{b_k} v_N,\ z^m w_j v_{N-1}
\]
as its $k$-th, ($n+2j-1$)-th, ($n+2j$)-th factors, while $(P \otimes \bw) s_{k,n+2j} \otimes (X_{N-1}^-)_{n+2j} \bv$ has
\[
	z^m w_j v_{N-1},\ z^m w_j v_N,\ z^{m_k} w_{b_k} v_N
\]
as its $k$-th, ($n+2j-1$)-th, ($n+2j$)-th factors.
Their remaining factors are the same.
Hence they coincide after taking wedge.
\end{proof}

We show (\ref{eq:omega_square}).
Note that
\begin{equation*}
	\omega_{N-1} = - \left( \sum_{p=1}^{N-2} \left( E_{N,p} E_{p,N-1} + E_{p,N-1} E_{N,p} \right) + H_{N-1} E_{N,N-1} + E_{N,N-1} H_{N-1} \right)
\end{equation*}
In $\omega_{N-1}'(v \wedge v_{L,N-1})$, all terms but concerning
\[
	E_{N,p}v \wedge E_{p,N-1}v_{L,N-1}
\]
obviously vanish.
Any nonzero summand of $E_{p,N-1}v_{L,N-1}$ has a factor $z^m w_j v_N \wedge z^m w_j v_p$ for some $j$ with $p \leq N-2$.
Then Lemma~\ref{lem:vanish} implies
\[
	T^2(E_{N,p}v \wedge E_{p,N-1}v_{L,N-1}) \wedge \ket{-mNL} = 0.
\]
Now the proof of (\ref{eq:remain2}) is complete.

\def\cprime{$'$} \def\cprime{$'$} \def\cprime{$'$} \def\cprime{$'$}
\providecommand{\bysame}{\leavevmode\hbox to3em{\hrulefill}\thinspace}
\providecommand{\MR}{\relax\ifhmode\unskip\space\fi MR }
% \MRhref is called by the amsart/book/proc definition of \MR.
\providecommand{\MRhref}[2]{%
  \href{http://www.ams.org/mathscinet-getitem?mr=#1}{#2}
}
\providecommand{\href}[2]{#2}

%\bibliographystyle{amsalpha}
%\bibliography{ref}

\end{document}